\newcommand{\secone}{\section}
\newcommand{\sectwo}{\subsection}
\newcommand{\secthree}{\subsubsection}
\tikzset{/tikz/commutative diagrams/row sep/normal=1.8em} %Makes tikz 0.3 behave like 0.9
\definecolor{dark-red}{rgb}{0.4,0.15,0.15}
\definecolor{dark-blue}{rgb}{0.15,0.15,0.4}
\definecolor{medium-blue}{rgb}{0,0,0.5}
\numberwithin{equation}{section}
\newtheorem{theorem}{Theorem}[section]
\newtheorem{proposition}[theorem]{Proposition}
\newtheorem{lemma}[theorem]{Lemma}
\newtheorem{corollary}[theorem]{Corollary}
\newtheorem*{fact*}{Fact}
\theoremstyle{definition}
\newtheorem{remark}[theorem]{Remark}
\DeclareMathOperator{\Aut}{Aut}
\newcommand{\toiso}{\stackrel{\sim}{\longrightarrow}}
\newcommand{\QQ}{\mathbb{Q}}
\newcommand{\ZZ}{\mathbb{Z}}
\newcommand{\RR}{\mathbb{R}}
\newcommand{\NN}{\mathbb{N}}
\newcommand{\gkappa}[1]{\kappa_{#1}} % Generalized kappa
\newcommand{\Diff}{\operatorname{Diff}}
\newcommand{\Hirz}{\tilde{\mathcal{L}}}
\newcommand{\mpoint}{\star} %\ast * \star \bigstar
\newcommand{\gpsi}[2]{{#1}_{\left(#2\right)}} 
\newcommand{\intclass}[1]{\nu_{\left(#1\right)}}  % Do I need one for sets and one for pairs?
\title[Tautological rings]{Tautological rings\\for high dimensional manifolds}
\author{S{\o}ren Galatius}
\email{galatius@stanford.edu}
\address{Department of Mathematics\\
	Stanford University\\
	Stanford CA, 94305}
\author{Ilya Grigoriev}
\email{ilyagr@math.uchicago.edu }
\address{
Department of Mathematics\\
University of Chicago\\
5734 S.\ University Avenue\\
Chicago, IL 60637
}
\author{Oscar Randal-Williams}
\email{o.randal-williams@dpmms.cam.ac.uk}
\address{Centre for Mathematical Sciences\\
Wilberforce Road\\
Cambridge CB3 0WB\\
UK}
\begin{document}
\begin{abstract}
We study tautological rings for high dimensional manifolds, that is, for each smooth manifold $M$ the ring $R^*(M)$ of those of characteristic classes of smooth fibre bundles with fibre $M$ which is generated by generalised Miller--Morita--Mumford classes. We completely describe these rings modulo nilpotent elements, when $M$ is a connected sum of copies of $S^n \times S^n$ for $n$ odd.
\end{abstract}

\maketitle

\secone{Introduction}

Let $E^{k+d}$ and $B^k$ be connected compact smooth oriented manifolds, and $\pi : E \to B$ be a smooth fibre bundle (i.e.\ a proper submersion) with fibre $M^{d}$. Then there is a $d$-dimensional oriented vector bundle $T_\pi = \mathrm{Ker}(D\pi)$ over $E$. If $c \in H^*(BSO(d);\QQ)$ is a rational characteristic class of such vector bundles then we define the associated \emph{generalised Miller--Morita--Mumford class} to be the fibre integral
$$\kappa_c(\pi) = \int_\pi c(T_\pi ) \in H^{*-d}(B;\QQ).$$

For a basis $\mathcal{B} \subset H^*(BSO(d);\QQ)$ this defines a ring homomorphism
\begin{align*}
\QQ[\kappa_c \, \vert \, c \in \mathcal{B}] &\longrightarrow H^*(B;\QQ)\\
\kappa_c &\longmapsto \kappa_c(\pi).
\end{align*}
We let $I_M \subset \QQ[\kappa_c \, \vert \, c \in \mathcal{B}]$ be the ideal consisting of those polynomials in the classes $\kappa_c$ which vanish on every such smooth fibre bundle with fibre $M$, and define the \emph{tautological ring} of $M$ to be the associated quotient ring
$$R^*(M) =  \QQ[\kappa_c \, \vert \, c \in \mathcal{B}]/I_M.$$
The name ``tautological ring'' is borrowed from the case $d=2$, where it usually refers to the subring of the cohomology ring (or Chow ring) of moduli spaces of Riemann surfaces which is generated by certain tautological classes $\kappa_i$ (which in our notation correspond to $\kappa_{e^{i+1}}$). Our definition coincides with the usual one in this case. There is a large literature on these rings; see \cite{Mumford, Looijenga, Faber, Morita}.

We shall be interested in manifolds of even dimension $d=2n$, in which case $H^*(BSO(2n);\QQ) = \QQ[p_1,p_2,\ldots, p_{n-1},e]$ and we take $\mathcal{B}$ to be the basis of monomials in these classes. Writing $W_g$ for the connected sum of $g$ copies of $ S^n \times S^n$, our goal in this paper is to study the structure of $R^*(W_g)$ modulo the nilradical $\sqrt{0}$, i.e.\ the ideal of nilpotent elements, and our main result can be stated as follows.

\begin{theorem}\label{thm:Main1}
Let $n$ be odd. Then
\begin{enumerate}[(i)]
\item\label{it:Main1:1} $R^*(W_0)/\sqrt{0} = \QQ[\kappa_{ep_1}, \kappa_{ep_2}, \ldots, \kappa_{ep_{n}}]$,

\item\label{it:Main1:2} $R^*(W_1)/\sqrt{0} = \QQ$,

\item\label{it:Main1:3} $R^*(W_g)/\sqrt{0} = \QQ[\kappa_{ep_1}, \kappa_{ep_2}, \ldots, \kappa_{ep_{n-1}}]$ for $g > 1$.
\end{enumerate}
\end{theorem}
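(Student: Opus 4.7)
Since $R^*(W_g)$ is by construction the image of $\QQ[\kappa_c]$ inside the cohomology ring $H^*(B\Diff^+(W_g);\QQ)$, determining $R^*(W_g)/\sqrt{0}$ amounts to identifying which polynomials in the $\kappa_c$'s represent non-nilpotent cohomology classes. My plan is to establish two containments: (I) the listed generators are algebraically independent modulo nilpotents, and (II) every other $\kappa_c$ is a polynomial in the listed ones modulo nilpotents.

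For (I), the model construction is the sphere bundle of a vector bundle: given an oriented rank $2n+1$ bundle $V \to B$, the unit sphere bundle $\pi: S(V) \to B$ is a $W_0$-bundle whose vertical tangent bundle satisfies $T_\pi \oplus \underline{\RR} \cong \pi^*V$, so $p_i(T_\pi) = \pi^* p_i(V)$ and $\int_\pi e(T_\pi) = \chi(S^{2n}) = 2$. The projection formula then gives $\kappa_{ep_i}(S(V)) = 2\,p_i(V)$, and with $B = \BSO{2n+1}$ carrying the universal $V$ this yields algebraic independence of $\kappa_{ep_1},\ldots,\kappa_{ep_n}$ in $R^*(W_0)/\sqrt{0}$, giving the $\supseteq$ direction of \eqref{it:Main1:1}. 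For \eqref{it:Main1:3} one needs a modified construction producing $W_g$-bundles with $g \geq 2$; the naive fibrewise connected sum of $S(V)$ with a trivial $W_g$-bundle is obstructed because the normal bundle of the chosen section carries $p_n(V)$, and this obstruction is precisely why only $\kappa_{ep_i}$ with $i \leq n-1$ survive.

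For (II), three ingredients drive the argument: (a) the relation $p_n = e^2$ in $H^*(\BSO{2n};\QQ) = \QQ[p_1,\ldots,p_{n-1},e]$ reduces every monomial to one in $p_1, \ldots, p_{n-1}$ and $e$; (b) stable parallelizability of $W_g$ forces $p_i(T_\pi)|_{\mathrm{fibre}} = 0$, yielding Serre-filtration-based multiplicative relations among $\kappa$-classes modulo nilpotents; and (c) for $n$ odd, the symplectic action on middle-dimensional cohomology via $\pi_0 \Diff^+(W_g) \to \Sp{2g}(\ZZ)$ provides further relations. Part \eqref{it:Main1:2} follows by combining $\chi(W_1) = 0$ with the vanishing of $H^{>0}(B\Sp{2}(\ZZ);\QQ) = H^{>0}(BSL_2(\ZZ);\QQ)$, collapsing every positive-degree $\kappa$-class to a nilpotent. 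For \eqref{it:Main1:3} one additionally needs a polynomial relation expressing $\kappa_{ep_n}$ in terms of $\kappa_{ep_1}, \ldots, \kappa_{ep_{n-1}}$ modulo nilpotents, together with the fact that no $\kappa_c$ of a different shape survives as a new generator.

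The main obstacle is direction (II), and specifically the asymmetry between $g = 0$ and $g \geq 2$: the polynomial relation forcing $\kappa_{ep_n}$ into the subring generated by $\kappa_{ep_1},\ldots,\kappa_{ep_{n-1}}$ must vanish when $g = 0$ but emerge as soon as middle-dimensional handles are present. Pinning this down is the technical heart of the proof, combining the Galatius--Randal-Williams stable-range computation of $H^*(B\Diff^+(W_g);\QQ)$ with an explicit unstable-range family of $W_g$-bundles using handle attachments unavailable for $W_0$.
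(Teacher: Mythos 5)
Your two-step strategy (generation modulo nilpotents plus algebraic independence via explicit bundles) is the right one, and your genus-zero independence argument via the sphere bundle $S^{2n}\to BSO(2n)\to BSO(2n+1)$ is exactly the one used. But there are genuine gaps on both sides. On the relations side, your ingredient (b) does not work as stated: knowing that $p_i(T_\pi)$ restricts trivially to the fibre only gives positive Serre filtration, and a Serre-filtration argument yields nilpotence only when the relevant class is pulled back from a space of finite $\QQ$-cohomological dimension --- which $B\Diff^+(W_g)$ is not. The actual mechanism is Atiyah's family signature theorem: the classes $\gkappa{\Hirz_i}$ attached to the (rescaled) Hirzebruch $L$-classes are pulled back from $B\Aut(H^n(W_g;\ZZ),\lambda)$, an arithmetic group of finite virtual cohomological dimension, hence are nilpotent; since the $\Hirz_i$ generate the same subring as the $p_i$, one then needs the product theorem deduced from \cite{grigoriev-relations} (valid only for $n$ odd): if $\pi_!(a)$ and $\pi_!(b)$ are nilpotent then so is $\pi_!(ab)$. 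This product theorem, which you do not identify, is indispensable: it is what upgrades nilpotence of the $\gkappa{\Hirz_i}$ to nilpotence of all $\gkappa{p_I}$ (note $\gkappa{\Hirz_J}$ for a monomial $\Hirz_J$ is \emph{not} a product of the $\gkappa{\Hirz_j}$); it is what makes part (\ref{it:Main1:2}) work --- $\gkappa{ep_i}$ is not pulled back from $B\Sp{2}(\ZZ)$, rather it is nilpotent because $\gkappa{e}=\chi=0$ and $\gkappa{p_i}$ is nilpotent; and it is what proves the multiplicative reduction $\chi^{|I|}\gkappa{ep_I}=\prod_j\gkappa{ep_j}^{i_j}$ modulo nilpotents. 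Your appeal to the stable-range computation of $H^*(B\Diff^+(W_g);\QQ)$ points the wrong way: in the stable range the tautological classes are free generators, so that computation can produce no relations.

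On the independence side for $g>1$ you have not produced the required family, and your heuristic mislocates the mechanism. The construction used is an explicit smooth action of $SO(n)\times SO(n)$ on $W_g$, realised as the boundary of a $G$-invariant handlebody $H_g\subset V_1\oplus V_2\oplus\RR$; this gives a bundle over $BSO(n)\times BSO(n)$ with $p_i(T_\pi)=\pi^*p_i(V_1\oplus V_2)$ and $\gkappa{ep_i}=\chi\cdot p_i(V_1\oplus V_2)$, and one checks by a finiteness (Venkov/Krull dimension) argument that $p_1(V_1\oplus V_2),\dots,p_{n-1}(V_1\oplus V_2)$ are algebraically independent. The disappearance of $\gkappa{ep_n}$ for $g\geq 1$ is not an ``obstruction to a construction'': one proves directly that $\gkappa{ep_n}=\gkappa{e^3}$ is nilpotent (via the section relations of \cite{grigoriev-relations} and the nilpotence of $e\in R^*(W_g,\mpoint)$ for $g>1$), i.e.\ it is the \emph{zero} polynomial in the remaining generators, not some nonzero one; consistently, in the Lie-group model $p_n(V_1\oplus V_2)=e(V_1\oplus V_2)^2=0$ because $V_1$ and $V_2$ are odd-dimensional. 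Without the signature-theorem input, the product theorem, and the explicit $G$-action, the proposal does not close.
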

In fact, (\ref{it:Main1:1}) already holds before taking the quotient by the nilradical, and holds for $n$ both even and odd (see Section~\ref{sec:genus-zero}).

As the Krull dimension of a ring is unchanged by taking the quotient by the nilradical, we may also conclude that $R^*(W_0)$ has Krull dimension $n$, $R^*(W_1)$ has Krull dimension 0, and $R^*(W_g)$ has Krull dimension $n-1$ as long as $g > 1$.

We shall also study two closely related tautological rings. Firstly, we may consider fibre bundles $\pi : E^{k+2n} \to B^k$ with fibre $M^{2n}$ equipped with a section $s : B \to E$. To such a fibre bundle and a $c \in \mathcal{B}$ we may also associate the characteristic class $c(\pi, s) = s^*(c(T_\pi))$, so in this case there is defined a ring homomorphism
\begin{align*}
\QQ[c, \kappa_c \, \vert \, c \in \mathcal{B}] &\longrightarrow H^*(B;\QQ)\\
c & \longmapsto c(\pi, s)\\
\kappa_c &\longmapsto \kappa_c(\pi)
\end{align*}
and we let $I_{(M,\mpoint)} \subset \QQ[c, \kappa_c \, \vert \, c \in \mathcal{B}]$ denote the ideal of those polynomials in the $c$ and $\kappa_c$ which vanish on all such bundles; we write $R^*(M,\mpoint)$ for the associated quotient ring.

\begin{theorem}\label{thm:Main2}
Let $n$ be odd. 
\begin{enumerate}[(i)]
\item\label{it:Main2:1} For each $c \in \mathcal{B}$, $(2-2g)\cdot c = \kappa_{e c} \in R^*(W_g,\mpoint)/\sqrt{0}$. 

\item\label{it:Main2:2} For $g \neq 1$, the map
$$R^*(W_g)/\sqrt{0} \longrightarrow R^*(W_g,\mpoint)/\sqrt{0}$$
is an isomorphism.

\item\label{it:Main2:3} The class $e \in R^*(W_1, \mpoint)$ is nilpotent, and the map
$$\QQ[p_1, p_2, \ldots, p_{n-1}] \longrightarrow R^*(W_1, \mpoint)/\sqrt{0}$$
is an isomorphism.
\end{enumerate}
\end{theorem}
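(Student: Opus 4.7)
The plan is to prove (i) first and deduce (ii) and (iii) from it using Theorem~\ref{thm:Main1}. For a bundle-with-section $(\pi : E \to B, s)$, the projection formula yields $s^*(c(T_\pi)) = \int_\pi [s(B)] \cdot c(T_\pi)$, where $[s(B)] \in H^{2n}(E;\QQ)$ is the Poincar\'e dual of the section; combined with $\int_\pi e(T_\pi) = \chi(W_g) = 2-2g$, the identity $(2-2g)\, c = \kappa_{ec}$ is equivalent, for every such $(\pi,s)$, to the uniform nilpotency in $H^*(B;\QQ)$ of
\[
\int_\pi(\alpha \cdot c(T_\pi)), \qquad \alpha := (2-2g)\cdot[s(B)] - e(T_\pi) \;\in\; H^{2n}(E;\QQ).
\]
By construction $\int_\pi \alpha = 0$, and in fact $\alpha$ restricts to zero on each fibre (since $e(TW_g) = \chi(W_g)\cdot[\mathrm{pt}]^\vee = (2-2g)\cdot[\mathrm{pt}]^\vee$), so $\alpha$ lies in positive Serre filtration; moreover $s^*\alpha = (1-2g)\,s^*e(T_\pi)$, using $s^*[s(B)] = e(s^*T_\pi) = s^*e(T_\pi)$.

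To upgrade the positive Serre filtration of $\alpha$ to nilpotency of the integrals $\int_\pi(\alpha \cdot c(T_\pi))$, I would pass to the doubled bundle $\pi_1 : E \times_B E \to E$ with diagonal section $\Delta$; by base change, $\kappa_c(\pi_1) = \pi^*\kappa_c(\pi)$ and $c(\pi_1,\Delta) = c(T_\pi)$, so the $\kappa$-subring of $R^*(W_g,\mpoint)$ pulls back from that of $R^*(W_g)$, whose structure modulo nilpotents is entirely controlled by Theorem~\ref{thm:Main1}. Combining this with iterated applications of the projection formula $\alpha \cdot [s(B)] = s_*(s^*\alpha)$ together with the computation $s^*\alpha = (1-2g)\,s^*e(T_\pi)$ should allow one to unwind $\int_\pi(\alpha \cdot c(T_\pi))$ as a polynomial in $\kappa$-classes of $\pi$ that Theorem~\ref{thm:Main1} declares nilpotent.

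For (ii) with $g \neq 1$, (i) gives $c = (2-2g)^{-1}\kappa_{ec}$ in $R^*(W_g,\mpoint)/\sqrt{0}$, so the forgetful map is surjective on reductions; for injectivity, the Becker--Gottlieb transfer for any $W_g$-bundle $\pi : E \to B$ (applicable since $\chi(W_g) \neq 0$) makes $\pi^* : H^*(B;\QQ) \to H^*(E;\QQ)$ injective, and together with $\kappa_c(\pi_1) = \pi^*\kappa_c(\pi)$, any uniformly-nilpotent polynomial relation among the $\kappa$-classes in $R^*(W_g,\mpoint)$ transports to such a relation in $R^*(W_g)$. For (iii), (i) with $g = 1$ gives $\kappa_{ec} \in \sqrt{0}$ in $R^*(W_1,\mpoint)$, and combined with Theorem~\ref{thm:Main1}(ii) every $\kappa_c$ with $c \neq 1$ is nilpotent; the algebraic independence of $p_1,\ldots,p_{n-1}$ in the quotient is then handled by realising arbitrary independent Pontryagin classes via trivial bundles $W_1 \times B \to B$ with constant section.

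The main obstacle lies in the remaining claim of (iii): that $e \in R^*(W_1,\mpoint)$ is itself nilpotent, where $\chi(W_1) = 0$ disables the transfer argument used in (ii). My approach would exploit the vanishing $e(TW_1) = 0 \in H^{2n}(W_1;\QQ)$ to place $e$ in positive Serre filtration for the forgetful fibration $W_1 \to B\Diff^+(W_1,\mpoint) \to B\Diff^+(W_1)$, and try to realise $e$ as a transgression from the frame-fixing subgroup $\Diff^+(W_1, D^{2n})$ (on which the classifying map of $s^*T_\pi$ to $BSO_{2n}$ is null-homotopic, so $e$ pulls back to zero), thereby reducing the nilpotency of $e$ to an analysis of that transgression in the Serre spectral sequence.
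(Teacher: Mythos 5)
There are genuine gaps at three points. For part (i), your reduction to the nilpotence of $\int_\pi(\alpha\cdot c(T_\pi))$ with $\alpha = (2-2g)[s(B)]-e(T_\pi)$ and $\int_\pi\alpha=0$ is a reasonable start (it parallels the paper, where $[s(B)]$ appears as the diagonal class $\nu_{(1\mpoint)}$ on a two-pointed moduli space), but the step you defer to ``unwinding'' is the entire content of the statement: $\int_\pi([s(B)]\cdot c(T_\pi))=s^*c(T_\pi)$ is \emph{not} a polynomial in $\kappa$-classes of $\pi$, so Theorem~\ref{thm:Main1} says nothing about it, and the identity $\alpha\cdot[s(B)]=s_*(s^*\alpha)$ only reproduces expressions in $s^*e(T_\pi)$, i.e.\ the class you are trying to control. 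The missing ingredient is the main theorem of \cite{grigoriev-relations} (Theorem~2.6 there, upgraded here as Theorem~\ref{thm:nilpotence-for-products}): for a $W_g$-bundle, if $\pi_!(a)$ and $\pi_!(b)$ are nilpotent then so is $\pi_!(ab)$. Applied with $a=\alpha$ and $b=c(T_\pi)$ --- using that $\kappa_{p_I}$ is nilpotent by the Hirzebruch signature argument --- this is exactly what produces relation \eqref{eq:G1} and hence (i). Positive Serre filtration of $\alpha$ alone yields nothing, because $\pi_!$ does not raise filtration and the base is arbitrary, so no power of the pushforward is forced to vanish.

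For part (iii) there are two further problems. First, algebraic independence of $p_1,\dots,p_{n-1}$ cannot be detected on trivial bundles $W_1\times B\to B$: any section has the form $b\mapsto(f(b),b)$, so $s^*T_\pi=f^*TW_1$, and $W_1=S^n\times S^n$ is stably parallelisable, so \emph{all} Pontrjagin classes pull back to zero under any such section. A genuinely nontrivial bundle is required; the paper uses the fibrewise product of unit sphere bundles over $BSO(n+1)\times BSO(n+1)$, pulled back along itself to acquire a section with $(s')^*T_{\pi'}\cong V_1\oplus V_2$, together with a separate argument (Venkov finiteness and Krull dimension, Lemma~\ref{lem:alg-ind-pi-bg}) that the $p_i(V_1\oplus V_2)$ for $i\le n-1$ are algebraically independent. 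Second, the nilpotence of $e\in R^*(W_1,\mpoint)$, which you rightly flag as the main obstacle, is not addressed by your transgression sketch: positive Serre filtration in $W_1\to B\Diff^+(W_1,\mpoint)\to B\Diff^+(W_1)$ is useless since $B\Diff^+(W_1)$ has infinite $\QQ$-cohomological dimension. The paper's route is to prove that $e(T_\pi)$ is a fibre homotopy invariant (Theorem~\ref{thm:EulerClassHtyInv}, via fibrewise Spanier--Whitehead duality), to show that a homologically trivial $W_1$-bundle becomes fibre homotopy trivial after pulling back along a tower killing torsion in the base, and then to pass from $R^*_{\text{torelli}}$ back to $R^*$ via Proposition~\ref{prop:ChangeOfClass}. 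Your part (ii) --- surjectivity from (i) plus injectivity via the transfer for $\chi\neq 0$ --- is essentially the paper's argument and is fine.
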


Secondly, we may consider fibre bundles $\pi : E^{k+2n} \to B^k$ with fibre $M^{2n}$ equipped with an embedding $S : D^{2n} \times B \to E$ over $B$ (or equivalently, with a normally framed section). The restriction $s = S\vert_{\{0\} \times B}$ defines a section, but the fact that it is framed means that $c(\pi,s)=0$. Thus in this case we denote by $I_{(M,D^{2n})} \subset \QQ[\kappa_c \, \vert \, c \in \mathcal{B}]$ the ideal of polynomials in the $\kappa_c$ which vanish on all such bundles, and write $R^*(M,D^{2n})$ for the associated quotient ring. The following is immediate from Theorem \ref{thm:Main1} and Theorem \ref{thm:Main2} (\ref{it:Main2:1}).

\begin{corollary}\label{cor:Main3}
If $n$ is odd then $R^*(W_g, D^{2n})/\sqrt{0} = \QQ$ for all $g$.
\end{corollary}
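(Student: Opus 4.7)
The plan is to combine Theorem~\ref{thm:Main1} with Theorem~\ref{thm:Main2}(\ref{it:Main2:1}) by comparing bundles-with-framed-embedding to bundles-with-section. The first observation is that sending $c \mapsto 0$ and $\kappa_c \mapsto \kappa_c$ defines a surjective ring homomorphism
\[
\Phi \colon R^*(W_g,\mpoint) \twoheadrightarrow R^*(W_g, D^{2n}).
\]
The point is that whenever a bundle $\pi$ comes equipped with a framed embedding $S \colon D^{2n} \times B \to E$, the restriction $s = S|_{\{0\} \times B}$ is a section along which $T_\pi$ is trivialised, so $c(\pi, s) = 0$ for every $c \in \mathcal{B}$. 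Consequently, any polynomial relation among the $c$ and $\kappa_c$ valid on all pointed bundles becomes, after setting $c = 0$, a relation among the $\kappa_c$ valid on all framed bundles, i.e.\ lies in $I_{(W_g, D^{2n})}$.

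The main step is then purely algebraic: by Theorem~\ref{thm:Main2}(\ref{it:Main2:1}) we have $\kappa_{ec} = (2-2g)\cdot c$ in $R^*(W_g,\mpoint)/\sqrt{0}$ for every $c \in \mathcal{B}$. Applying $\Phi$, which descends to the quotient by the nilradical since every ring homomorphism sends nilpotents to nilpotents, gives
\[
\kappa_{ec} = 0 \quad \text{in } R^*(W_g, D^{2n})/\sqrt{0}
\]
for every $c \in \mathcal{B}$, provided $g \neq 1$. Combined with Theorem~\ref{thm:Main1}, which presents $R^*(W_g)/\sqrt{0}$ as a polynomial ring whose generators are all of the form $\kappa_{ec}$ for some $c \in \mathcal{B}$ (taking $c = e^2$ to account for $\kappa_{ep_n}$ in the genus-$0$ case), and with the obvious surjection $R^*(W_g) \twoheadrightarrow R^*(W_g, D^{2n})$, this shows that each polynomial generator of $R^*(W_g)/\sqrt{0}$ maps to zero in $R^*(W_g, D^{2n})/\sqrt{0}$. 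Hence $R^*(W_g, D^{2n})/\sqrt{0} = \QQ$ for $g \neq 1$.

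The remaining case $g = 1$ is immediate from Theorem~\ref{thm:Main1}(\ref{it:Main1:2}): we have $R^*(W_1)/\sqrt{0} = \QQ$, which surjects onto $R^*(W_1, D^{2n})/\sqrt{0}$, forcing the latter to be a nonzero quotient of a field, hence $\QQ$. I do not anticipate any substantive obstacle here; the only point meriting explicit verification is the well-definedness of $\Phi$, and this is essentially tautological from the definitions. The rest is bookkeeping on top of the two main theorems.
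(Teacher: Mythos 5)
Your proof is correct and follows exactly the route the paper intends: the paper simply declares the corollary ``immediate from Theorem~\ref{thm:Main1} and Theorem~\ref{thm:Main2}~(\ref{it:Main2:1})'', and your argument---the quotient map $R^*(W_g,\mpoint)\to R^*(W_g,D^{2n})$ killing the classes $c$, hence killing every $\kappa_{ec}$ modulo nilpotents, combined with the presentation of $R^*(W_g)/\sqrt{0}$ as a polynomial ring on classes of that form---is precisely the omitted deduction. The separate treatment of $g=1$ via Theorem~\ref{thm:Main1}~(\ref{it:Main1:2}) is fine (and in fact unnecessary, since Theorem~\ref{thm:Main2}~(\ref{it:Main2:1}) already gives $\kappa_{ec}=0$ modulo nilpotents when $\chi=0$).
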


The second-named author has shown~\cite[Theorem 1.1]{grigoriev-relations} that for $n$ odd the ring $R^*(W_g)$ is finitely-generated as a $\QQ$-algebra, and hence $R^*(W_g, D^{2n})$ is too. (In fact, Theorem 1.1 of \cite{grigoriev-relations} only shows this for $g > 1$, but in Proposition \ref{prop:fg} we will show that $R^*(W_g,\mpoint)$, and hence $R^*(W_g, D^{2n})$, is finitely-generated for all $g$.) Corollary \ref{cor:Main3} therefore implies the following.

\begin{corollary}\label{cor:Main4}
If $n$ is odd then $R^*(W_g, D^{2n})$ is a finite-dimensional rational vector space for all $g$.
\end{corollary}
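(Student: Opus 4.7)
The plan is to deduce the corollary from two inputs already in hand: the finite-generation statement (from \cite{grigoriev-relations} combined with Proposition~\ref{prop:fg}) and Corollary~\ref{cor:Main3}, which asserts $R^*(W_g, D^{2n})/\sqrt{0} = \QQ$. Writing $A = R^*(W_g, D^{2n})$, the task reduces to the elementary fact that a finitely generated commutative graded $\QQ$-algebra whose quotient by the nilradical is $\QQ$ is automatically finite-dimensional as a $\QQ$-vector space.

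First I would observe that $A$ is naturally a graded ring, since it is the quotient of $\QQ[\kappa_c \mid c \in \mathcal{B}]$ (graded by placing $\kappa_c$ in degree $|c|-2n$) by a homogeneous ideal. Consequently one may pick finitely many homogeneous $\QQ$-algebra generators $x_1, \ldots, x_m$ of $A$.

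Next I would translate each generator into the nilradical. The projection $A \to A/\sqrt{0} = \QQ$ sends each $x_i$ to a scalar $\lambda_i \in \QQ$, and this scalar must vanish whenever $x_i$ has positive degree, since $\QQ$ is concentrated in degree zero. Replacing each degree-zero generator $x_i$ by $x_i - \lambda_i$ yields a new set of $\QQ$-algebra generators all lying in $\sqrt{0}$, hence all nilpotent, so there is a common exponent $N$ with $x_i^N = 0$ for every $i$. Then any monomial $x_1^{a_1}\cdots x_m^{a_m}$ with some $a_i \geq N$ vanishes in $A$, so $A$ is spanned as a $\QQ$-vector space by the at most $N^m$ monomials with every $a_i < N$, and is therefore finite-dimensional.

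I do not expect any real obstacle: all of the substantive content has been packaged into Corollary~\ref{cor:Main3} and the finite-generation statement, and the only minor point to verify is that $A$ is graded in a way that permits homogeneous generators, which is immediate from the construction of the ideal $I_{(W_g, D^{2n})}$ as a homogeneous ideal.
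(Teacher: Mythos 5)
Your proposal is correct and follows exactly the paper's route: the paper deduces the corollary immediately from Corollary~\ref{cor:Main3} together with the finite generation of $R^*(W_g,D^{2n})$ (via \cite{grigoriev-relations} and Proposition~\ref{prop:fg}), leaving the elementary graded-algebra argument implicit. Your write-up merely spells out that last step, and does so correctly.
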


It would be interesting to have a computation of the ring $R^*(W_g, D^{2n})$ for some $g > 1$. In proving these theorems, we obtain results about the vanishing of certain elements in $R^*(M)/\sqrt{0}$ for any manifold $M$ (in Theorem~\ref{thm:hirz-nilpotent}), and about the algebraic independence of certain elements in $R^*(W_g)/\sqrt{0}$ for $n$ even (in Theorem~\ref{thm:algebraic-independence}). We cannot obtain results as conclusive as Theorem \ref{thm:Main1} for $n$ even, as our argument relies on \cite{grigoriev-relations} which does not apply in this case.

\sectwo{Acknowledgments}
This project was initiated while the three authors were at the ``Algebraic topology'' program at MSRI, supported by NSF Grant No.\ 0932078000, and continued while the authors were at the ``Homotopy theory, manifolds, and field theories'' program at the Hausdorff Research Institute for Mathematics. SG was partially supported by NSF grants DMS-1105058 and DMS-1405001. ORW was partially supported by EPSRC grant EP/M027783/1.

\secone{Nilpotence of the Hirzebruch $\mathcal{L}$-classes}\label{sec:Hirz}

In this section, we prove that certain generalised Miller--Morita--Mumford classes are nilpotent in $R^*(M)$ for \emph{any} smooth even-dimensional manifold $M^{2n}$. We will later apply this together with results from~\cite{grigoriev-relations} to the manifolds $W_g$ in order to obtain an upper bound on $R^*(W_g)/\sqrt{0}$.

  \sectwo{(Modified) Hirzebruch $\mathcal{L}$-classes}\label{sec:hirzebruch-classes}
  We first define certain cohomology classes $\Hirz_i \in H^{4i}(BSO(2n);\QQ)$. Consider the graded ring $\QQ[x_1, \ldots, x_n]$ in which each variable $x_1, \ldots, x_n$ has degree 2.  We define homogenous symmetric polynomials $\Hirz_i$ by the expression
\[ \Hirz = 2^n + \Hirz_1 + \Hirz_2 + \cdots = \prod_{i=1}^{n} \frac{x_i}{\operatorname{tanh} x_i/2}.\]
(The function $\frac{x}{\operatorname{tanh} x/2}$ is even, so the above expresses $\Hirz_i$ as a symmetric polynomial in $x_1^2, x_2^2, \ldots, x_n^2$.) The subring of symmetric polynomials is generated by the elementary symmetric polynomials $\sigma_{i,n}(x_1^2, \ldots, x_n^2)$, so we may express $\Hirz_i$ as a polynomial $\Hirz_i(\sigma_{1,n}, \ldots, \sigma_{n,n})$, and we write
$$\Hirz_i = \Hirz_i(p_1, p_2, \ldots, p_n) \in H^{4i}(BSO(2n);\QQ)$$
for this polynomial evaluated at the Pontrjagin classes. Note that these differ from the usual Hirzebruch $L$-classes, $\mathcal{L}_i$, by $\Hirz_i = 2^{n-2i} \mathcal{L}_i$. These classes may be written as
$$\Hirz_i = 2^{n}(2^{2i-1}-1) \frac{B_i}{(2i)!} \cdot p_i + (\text{polynomial in lower Pontrjagin classes})$$
where $B_i$ is the $i$th Bernoulli number, cf.\ \cite[Problem~19-C]{milnostash-characteristic} (this uses the convention of \cite[Appendix B]{milnostash-characteristic}, in which $B_i = (-1)^i \cdot 2i \cdot \zeta(1-2i)$). In particular, these leading coefficients are never zero, so $\Hirz_1, \Hirz_2, \ldots, \Hirz_n$ generate the same subring of $H^*(BSO(2n);\QQ)$ as $p_1, p_2, \ldots, p_n$.

\sectwo{Nilpotence due to the Hirzebruch signature thorem}

%Let $\pi : E \to B$ be a smooth fibre bundle with fibre $M^{2n}$. 
For any manifold $M^{2n}$ the characteristic classes $\Hirz_i \in H^{4i}(BSO(2n);\QQ)$ give rise to the corresponding generalised Miller--Morita--Mumford classes $\gkappa{\Hirz_i} \in R^*(M)$. Our present goal is to prove the following.

\begin{theorem}\label{thm:hirz-nilpotent}
The classes $\gkappa{\Hirz_i}\in R^*(M)$ are nilpotent for all natural numbers $i\geq 1$ such that $4i - 2n \neq 0$ (all natural numbers if $n$ odd).
\end{theorem}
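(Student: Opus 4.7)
My plan is to deduce Theorem~\ref{thm:hirz-nilpotent} by combining the family version of Hirzebruch's signature theorem with the flatness of the fibrewise cohomology local system, producing multiplicative constraints on Chern classes that force nilpotence. The family signature theorem, a consequence of the Atiyah--Singer family index theorem applied to the fibrewise signature operator, expresses $\int_\pi \mathcal{L}(T_\pi) = \mathrm{ch}(V^+) - \mathrm{ch}(V^-)$ in $H^*(B;\QQ)$ for any smooth oriented fibre bundle $\pi : E \to B$ with closed fibre $M^{2n}$, where $V^\pm$ are complex vector bundles on $B$ obtained by complexifying the $\pm 1$-eigenspaces of the signature involution $\tau = i^{p(p-1)+n}\ast$ on fibrewise harmonic forms. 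Each $V^\pm$ has rank bounded by the total Betti number $\sum_k b_k(M)$. Since $\Hirz_i = 2^{n-2i}\mathcal{L}_i$, the class $\int_\pi \Hirz_i(T_\pi)$ is a non-zero scalar multiple of the corresponding Chern character component $\mathrm{ch}_{2i-n}(V^+) - \mathrm{ch}_{2i-n}(V^-)$, so nilpotence of one is equivalent to nilpotence of the other.

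The key observation is that $V^+ \oplus V^-$ is smoothly isomorphic to the complexified fibrewise cohomology $\bigoplus_k (R^k\pi_*\RR) \otimes \mathbb{C}$, which is \emph{flat} on $B$ since each $R^k \pi_* \RR$ is a local system. Flat complex vector bundles have vanishing rational Chern classes --- the Chern--Weil representatives computed from a flat connection are identically zero --- so
\[
c(V^+) \cdot c(V^-) = c(V^+ \oplus V^-) = 1 \in H^*(B;\QQ).
\]
Viewing $c(V^\pm)$ as polynomials of degrees $r_\pm \leq \sum_k b_k(M)$ in a formal variable $t$ with constant term $1$, this identity expresses $c(V^+)$ and $c(V^-)$ as multiplicative inverses. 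But the only polynomial with polynomial inverse is the constant $1$, so the affine variety cut out in $\mathrm{Spec}\,\QQ[c_k(V^\pm)]$ by the resulting relations reduces to the single point $\{c_k(V^\pm) = 0 : k \geq 1\}$. Consequently each $c_k(V^\pm)$, and hence each Chern character component $\mathrm{ch}_k(V^\pm)$ for $k \geq 1$, is nilpotent in the universal coordinate ring, with nilpotence index depending only on $M$. Specialising to any fibre bundle $\pi$ therefore gives uniform nilpotence of $\gkappa{\Hirz_i}$ in $R^*(M)$ for all $2i - n > 0$; the cases $2i - n < 0$ and $2i - n = 0$ are trivial by degree or excluded by hypothesis respectively.

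The main technical obstacle is setting up the family signature theorem in the precise form used above, including the identification of $V^+ \oplus V^-$ with the flat complexified fibrewise cohomology. The standard Atiyah--Singer family index theorem does give $\int_\pi \mathcal{L}(T_\pi) = \mathrm{ch}(\mathrm{Ind}\,D)$ with $\mathrm{Ind}\,D = V^+ - V^-$ a virtual bundle, but careful bookkeeping is required to verify that $V^+ \oplus V^-$ is smoothly isomorphic to the complexified full cohomology (and not, say, to the middle cohomology alone), which is what allows flatness to produce the multiplicative identity underpinning Step~2 of the argument.
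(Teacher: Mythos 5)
Your argument is correct, and after the common starting point --- the family signature theorem, which identifies $\gkappa{\Hirz_i}(\pi)$ up to a nonzero scalar with $\mathrm{ch}_{2i-n}(V^+)-\mathrm{ch}_{2i-n}(V^-)$ --- it genuinely diverges from the proof in the paper. The paper's route is to observe (following Atiyah) that these classes are pulled back along $B \to B\Aut(H,\lambda)$, where $H = H^n(M;\ZZ)/\mathrm{torsion}$ with its intersection form; since $\Aut(H,\lambda)$ is an arithmetic group it has finite rational cohomological dimension, so every positive-degree class in its rational cohomology is nilpotent for degree reasons. You instead use only that $V^+\oplus V^-$ is isomorphic to the flat total-cohomology bundle, whence $c(V^+)\cdot c(V^-)=1$, and then a Nullstellensatz argument in the universal ring $\QQ[c_\bullet(V^+),c_\bullet(V^-)]$ modulo the Whitney-sum relations: the associated variety is the reduced origin, so every $c_k(V^\pm)$ is nilpotent with exponent depending only on $\sum_k b_k(M)$. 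Both arguments are valid. Yours buys independence from the Borel--Serre finiteness of the virtual cohomological dimension of arithmetic groups, at the price of the index-theoretic bookkeeping you flag (constancy of $\dim\ker$ of the fibrewise signature operator, so that $V^\pm$ are honest bundles, and the Hodge-theoretic identification of $V^+\oplus V^-$ with the Gauss--Manin flat bundle); the paper's version only ever needs the flat \emph{middle} cohomology, which is why it lands in an arithmetic group. Two points worth making explicit in your write-up: the uniformity of the nilpotence exponent over all bundles $\pi$ is essential for concluding nilpotence in $R^*(M)$, which is defined by vanishing on all bundles, and your universal-ring argument does supply it; and the vanishing of rational Chern classes of flat bundles is available by Chern--Weil precisely because the bases $B$ in the definition of $R^*(M)$ are smooth manifolds.
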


  The main ingredient in the proof is a parametrised version of the Hirzebruch signature theorem, due to Atiyah \cite{atiyah-signature}, which relates the classes $\gkappa{\Hirz_i}$ with classes arising from the cohomology of an arithmetic group. 

Let $M^{2n}$ be an oriented manifold, and let $H = H^n(M;\ZZ)/\mathrm{torsion}$. It is a consequence of Poincar{\'e} duality that the $(-1)^n$-symmetric pairing
$$\lambda : H \otimes_\ZZ H \stackrel{\cup}{\longrightarrow} H^{2n}(M;\ZZ)/\text{torsion} = H^{2n}(M;\ZZ) \toiso \ZZ$$
is non-degenerate. Any homotopy equivalence of $M$ preserving its orientation therefore induces an isometry of the $(-1)^n$-symmetric form $(H, \lambda)$. In particular if $\pi : E \to B$ is a smooh oriented fibre bundle with fibre $M$, there is a homomorphism
$$\phi: \pi_1(B) \longrightarrow \mathrm{Aut}(H, \lambda)$$
from the fundamental group of $B$ to the group of isometries of $(H,\lambda)$, inducing a map $\phi : B \to B\mathrm{Aut}(H, \lambda)$.

We may further consider the induced $(-1)^n$-symmetric form $\lambda_\RR$ on $H_\RR = H \otimes_\ZZ \RR$. There are now two cases, depending on the parity of $n$. If $n$ is odd then $(H_\RR, \lambda_\RR)$ is a non-degenerate  skew-symmetric form over $\RR$, and so is determined by its rank. This identifies $\Aut(H_\RR, \lambda_\RR) = \mathrm{Sp}_{2g}(\RR)$, where $H_\RR$ has dimension $2g$. If $n$ is even then $(H_\RR, \lambda_\RR)$ is a non-degenerate symmetric form over $\RR$, and so is determined by its rank and signature. This identifies $\Aut(H_\RR, \lambda_\RR) = \mathrm{O}_{p,q}(\RR)$, where $H_\RR$ has dimension $p+q$ and signature $p-q$. We obtain a composition
\begin{equation*}
B \overset{\phi}\longrightarrow B\mathrm{Aut}(H, \lambda) \longrightarrow B\mathrm{Aut}(H_\RR, \lambda_\RR) = 
\begin{cases}
B\mathrm{Sp}_{2g}(\RR)\\
B\mathrm{O}_{p,q}(\RR).
\end{cases}  
\end{equation*}
Now Atiyah's work \cite[Section 4]{atiyah-signature} implies that the classes $\kappa_{\Hirz_i}$ are pulled back via this composition, and so in particular they are in the image of $\phi^* : H^*(B\mathrm{Aut}(H, \lambda);\QQ) \to H^*(B;\QQ)$. Theorem \ref{thm:hirz-nilpotent} is then an immediate consequence of the fact that all positive degree elements of $H^*(B\mathrm{Aut}(H, \lambda);\QQ)$ are nilpotent: this is because $\mathrm{Aut}(H, \lambda)$ is an arithmetic group, so has finite virtual cohomological dimension and so in particular finite $\QQ$-cohomological dimension.

\secone{Relations modulo nilpotence}\label{sec:Rels}

The main result (Theorem 2.6) of~\cite{grigoriev-relations} implies that, when $n$ is odd, for any oriented manifold bundle $E \to B$ with fibre $W_g$ and even-dimensional classes $a,b \in H^*(E;\QQ)$ such that $\pi_!(a) = 0$ and $\pi_!(b)=0$, the class $\pi_!(ab)$ is nilpotent. We shall use the following stronger statement, which will be deduced quite formally from \cite[Theorem 2.6]{grigoriev-relations}.

\begin{theorem}\label{thm:nilpotence-for-products}
Let $\pi : E \to B$ be a fibration with homotopy fibre homotopy equivalent to $W_g =g(S^n \times S^n)$, for $n$ odd, such that the action of $\pi_1(B,b)$ on $H^{2n}(\pi^{-1}(b);\QQ)$ is trivial. For classes $a, b \in H^*(E;\QQ)$  such that $A= \vert \pi_!(a) \vert$ and $B= \vert \pi_!(b) \vert$ are even, if $\pi_!(a)^k=0$ and $\pi_!(b)^l=0$, then $\pi_!(ab)^{(2g+1)(Ak + Bl)}=0$.
\end{theorem}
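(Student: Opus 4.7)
The plan is to reduce to Theorem~2.6 of~\cite{grigoriev-relations} by modifying $a$ and $b$ via pulled-back corrections, using the projection formula $\pi_!(c\cdot\pi^*d) = \pi_!(c)\cdot d$. Write $\alpha = \pi_!(a)$, $\beta = \pi_!(b)$, $\gamma = \pi_!(ab)$. A first, na\"ive attempt is to set $\tilde a = a\cdot\pi^*(\alpha^{k-1})$ and $\tilde b = b\cdot\pi^*(\beta^{l-1})$, whose pushforwards $\alpha^k$ and $\beta^l$ both vanish; Theorem~2.6 then gives nilpotence of $\pi_!(\tilde a \tilde b) = \gamma\cdot\alpha^{k-1}\beta^{l-1}$, but this is essentially vacuous because $\alpha^{k-1}\beta^{l-1}$ is already nilpotent, giving no control on $\gamma$ itself.

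A more effective modification uses a fibrewise ``integration class'' $w\in H^{2n}(E;\QQ)$ with $\pi_!(w) = 1$: then $\tilde a := a - \pi^*(\alpha)\cdot w$ and $\tilde b := b - \pi^*(\beta)\cdot w$ satisfy $\pi_!(\tilde a) = \pi_!(\tilde b) = 0$, and expanding gives
\[
\pi_!(\tilde a \tilde b) = \gamma - \alpha\cdot\pi_!(bw) - \beta\cdot\pi_!(aw) + \alpha\beta\cdot\pi_!(w^2) = \gamma - \epsilon
\]
for some $\epsilon\in (\alpha,\beta)\subset H^*(B;\QQ)$. Theorem~2.6 applied to $\tilde a,\tilde b$ produces an explicit $M$ with $(\gamma - \epsilon)^M = 0$; expanding this relation and solving for $\gamma^M$ places $\gamma^M \in (\epsilon) \subseteq (\alpha,\beta)$, so by iteration $\gamma^{rM}\in (\alpha,\beta)^r$. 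For $r = k + l - 1$ the right-hand side vanishes, since $\alpha^k = \beta^l = 0$ gives $(\alpha,\beta)^{k+l-1} = 0$. This yields nilpotence of $\gamma$ with a bound of the form $M(k+l-1)$.

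The principal obstacle is producing $w$ with $\pi_!(w) = 1$. For smooth bundles with $\chi(W_g) = 2-2g \neq 0$ one can take $w = e(T_\pi)/(2-2g)$; for $g = 1$ or for non-smooth fibrations such a $w$ need not exist. The trivial-monodromy hypothesis on $H^{2n}$ puts a fibrewise fundamental class on the $E_2$-page of the Serre spectral sequence, but it may be obstructed from lifting by higher differentials. A universal argument using the classifying space of the fibration, or alternatively a finer iteration that avoids $w$ altogether by combining many applications of Theorem~2.6 to modifications of the first, na\"ive type, should circumvent this. Calibrating the final bound to the claimed sharp exponent $(2g+1)(Ak+Bl)$ will then amount to plugging in the explicit nilpotence exponent from Theorem~2.6 and a careful combinatorial count of how the $\alpha,\beta$ factors interact in the binomial expansion; this bookkeeping will be the main technical task.
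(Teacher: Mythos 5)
Your central mechanism --- replacing $a$ by $\tilde a = a - \pi^*(\alpha)\cdot w$ for a class $w \in H^{2n}(E;\QQ)$ with $\pi_!(w)=1$ --- has a genuine gap at exactly the point you flag, and the proposed workarounds do not close it. The theorem is stated for fibrations, not smooth bundles, so there is no vertical tangent bundle and no Euler class to divide by; even for smooth bundles your candidate $w = e(T_\pi)/\chi$ fails when $g=1$ since $\chi=0$ (and a fibrewise-duality Euler class as in Theorem~\ref{thm:EulerClassHtyInv} would not help there either). The existence of $w$ amounts to the survival of the fibrewise fundamental class in the Serre spectral sequence, i.e.\ to surjectivity of $H^{2n}(E;\QQ)\to H^{2n}(\pi^{-1}(b);\QQ)$, and, as you note, the differentials out of $E_r^{0,2n}$ are potential obstructions which the hypotheses do not rule out. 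Of your two suggested escapes, the ``finer iteration of na\"ive modifications'' cannot work for the reason you yourself give (multiplying by $\pi^*(\alpha^{k-1})$ only ever proves nilpotence of $\gamma$ times an already nilpotent class), and the ``universal argument over the classifying space'' is not developed. So as written the argument does not establish the statement in its stated generality.

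The idea you are missing is to change the \emph{base} rather than the classes. Let $F$ be the homotopy fibre of $\iota_A^k\times\iota_B^l : K(\QQ,A)\times K(\QQ,B)\to K(\QQ,kA)\times K(\QQ,lB)$, so that $H^*(F;\QQ)=\QQ[\iota_A,\iota_B]/(\iota_A^k,\iota_B^l)$ has $\QQ$-cohomological dimension $Ak+Bl-1$. The hypotheses $\alpha^k=0=\beta^l$ let one lift the map $(\alpha,\beta):B\to K(\QQ,A)\times K(\QQ,B)$ to $f:B\to F$; over the homotopy fibre $B'$ of $f$ the pullbacks of $\alpha$ and $\beta$ vanish identically, so Theorem~2.6 of \cite{grigoriev-relations} applies to the pulled-back fibration and shows that $\gamma^{2g+1}$ restricts to zero on $B'$. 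Hence $\gamma^{2g+1}$ has positive Serre filtration for $f$, and $\gamma^{(2g+1)(Ak+Bl)}$ has filtration exceeding the $\QQ$-cohomological dimension of $F$, so vanishes; no class $w$ is required. For what it is worth, in situations where your $w$ does exist (smooth bundles with $g\neq 1$, say), your ideal-theoretic iteration $\gamma^{(2g+1)r}\in(\alpha,\beta)^r$ is correct and gives the sharper exponent $(2g+1)(k+l-1)$; but the bookkeeping you anticipate as the main task is not where the difficulty lies.
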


If $g \neq 1$, this is a consequence of~\cite[Lemma~5.7]{grigoriev-relations}. The proof we present here is independent of genus. 

\begin{proof}
First suppose that both $A$ and $B$ are at least $2$. Let $F$ be defined by the homotopy fibre sequence
$$F \longrightarrow K(\QQ,A) \times K(\QQ, B) \overset{\iota_A^k \times \iota_B^l}\longrightarrow K(\QQ,kA) \times K(\QQ, lB)$$
where the map between the Eilenberg--MacLane spaces is given by the product of the the appropriate powers of the fundamental cohomology classes. If $A$ is even it is an elementary calculation to see that 
$$H^*(F;\QQ) = \QQ[\iota_A, \iota_B]/(\iota_A^k, \iota_B^l),$$
and that $F$ is also simply-connected. Thus $F$ has $\QQ$-cohomological dimension $Ak+Bl-1$. 

As $\pi_!(a)^k=0$ and $\pi_!(b)^l=0$, the map
$$(\pi_!(a),\pi_!(b)) : B \longrightarrow K(\QQ,A) \times K(\QQ, B)$$
representing $(\pi_!(a), \pi_!(b))$ lifts to a map $f : B \to F$, and we let $B'$ denote the homotopy fibre of $f$, and $i : B' \to B$ the evaluation map. The class $i^*(\pi_!(ab))$ is the same as $\pi'_!(a' b')$, where $\pi' : E' \to B'$ is the pullback of $\pi$ along $i$, and $a'$ and $b'$ are the pullbacks of $a$ and $b$ along the map $E' \to E$ covering $i$. The classes $\pi'_!(a')$ and $\pi'_!(b')$ are trivial, so by~\cite[Theorem~2.6]{grigoriev-relations} we have $\pi'_!(a'b')^{2g+1}=0$. Thus $\pi_!(ab)^{2g+1}$ lies in the kernel of $i^*$, so has positive Serre filtration with respect to the map $f$. Thus $\pi_!(ab)^{(2g+1)(Ak + Bl)}$ has Serre filtration at least $Ak + Bl$, which is beyond the $\QQ$-cohomological dimension of $F$, so it is zero as required.

Finally, if $A=0$ and $B=0$ then this becomes \cite[Theorem 2.6]{grigoriev-relations}, which leaves the case $A=0$ and $B \geq 2$. In this case we must have $\pi_!(a)=0$, and the above argument can be followed using the homotopy fibre of $K(\QQ,B) \to K(\QQ,Bl)$ in the place of $F$.
\end{proof}

Let us also recall some further results from \cite{grigoriev-relations}, and a convenient relation between $R^*(W_g)$ and $R^*(W_g,\mpoint)$. We shall often write $\chi = \chi(W_g) = 2-2g$ for the Euler characteristic of $W_g$.

\begin{lemma}\label{lem:recollections}
Let $n$ be odd.
\begin{enumerate}[(i)]
\item\label{it:recollections:1} For any $c \in \mathcal{B}$, we have
\begin{equation}\label{eq:G1}
\chi^2 c - \chi\gkappa{ec} - \chi e \gkappa{c} + \gkappa{e^2} \gkappa{c} = 0 \in R^*(M_g,\mpoint)/\sqrt{0}.
\end{equation}
\item\label{it:recollections:2} We have
\begin{equation}\label{eq:G2}
(\chi-2)\chi e + \gkappa{e^2} = 0 \in R^*(M_g,\mpoint)/\sqrt{0}.
\end{equation}
\item\label{it:recollections:3} If $g \neq 1$ then the map
$$R^*(W_g) \longrightarrow R^*(W_g,\mpoint)$$
is injective.
\end{enumerate}
\end{lemma}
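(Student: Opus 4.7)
The plan is to prove (i) and (ii) by applying Theorem~\ref{thm:nilpotence-for-products} to carefully chosen classes on a pointed $W_g$-bundle $(\pi : E \to B,\, s : B \to E)$, and to prove (iii) by a self-pullback construction. Throughout, we write $[s] = s_*(1) \in H^{2n}(E;\QQ)$ for the class pushed forward from $B$ along the section, and we will repeatedly use two standard identities: the projection-formula identity $\pi_!([s] \cdot \alpha) = s^*(\alpha)$ for any $\alpha \in H^*(E;\QQ)$, and the self-intersection identity $s^*[s] = s^*(s_*(1)) = e$ in $H^*(B;\QQ)$.

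For (ii), we would set $a = e(T_\pi) - \pi^*(\chi) \cdot [s]$. Since $\pi_!(e(T_\pi)) = \chi$ and $\pi_!([s]) = 1$, we have $\pi_!(a) = 0$, so the $A = B = 0$ case of Theorem~\ref{thm:nilpotence-for-products} (namely \cite[Theorem~2.6]{grigoriev-relations}) makes $\pi_!(a^2)$ nilpotent. Expanding $a^2$ and applying the two identities, a short calculation gives
\[ \pi_!(a^2) \;=\; \kappa_{e^2} - 2\chi e + \chi^2 e \;=\; \kappa_{e^2} + \chi(\chi - 2)\,e, \]
which is (ii).

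For (i), we would take $a = \pi^*(\chi) \cdot c(T_\pi) - \pi^*(\kappa_c) \cdot e(T_\pi)$ and $b = e(T_\pi) - \pi^*(\chi) \cdot [s]$. Both are engineered so that $\pi_!(a) = \pi_!(b) = 0$, so Theorem~\ref{thm:nilpotence-for-products} again makes $\pi_!(ab)$ nilpotent. A routine expansion using the same two identities will yield
\[ \pi_!(ab) \;=\; \chi\,\kappa_{ec} - \chi^2\, c - \kappa_c \kappa_{e^2} + \chi \kappa_c\, e, \]
and negating gives (i). The main subtlety is finding the right pair $(a,b)$: several natural alternatives produce an extra $\chi^2 \kappa_c e$ term that must then be cancelled using (ii), whereas the displayed choice delivers the desired relation in one step.

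For (iii), suppose $P$ is a polynomial in the $\kappa_c$'s whose class vanishes in $R^*(W_g,\mpoint)$, and let $\pi : E \to B$ be an arbitrary $W_g$-bundle. The idea is to manufacture a pointed bundle from $\pi$ by pulling it back along itself: the projection $\pi' : E \times_B E \to E$ comes equipped with the diagonal $\Delta : E \to E \times_B E$ as a canonical section, making it a pointed $W_g$-bundle. By naturality of fibre integration, $\kappa_c(\pi') = \pi^*(\kappa_c(\pi))$, so the hypothesis applied to $(\pi', \Delta)$ gives $\pi^*(P(\kappa_c(\pi))) = 0$ in $H^*(E;\QQ)$. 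When $g \neq 1$ we have $\chi \neq 0$, and the projection formula shows that $\chi^{-1} \pi_!$ is a left inverse to $\pi^* : H^*(B;\QQ) \to H^*(E;\QQ)$; hence $\pi^*$ is injective, forcing $P(\kappa_c(\pi)) = 0$ in $H^*(B;\QQ)$. Since $\pi$ was arbitrary, $P$ represents zero in $R^*(W_g)$, as required.
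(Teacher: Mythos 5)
Your proposal follows essentially the same route as the paper. Parts (i) and (ii) are precisely the relations the paper imports from Examples 5.20 and 5.18 of \cite{grigoriev-relations}, and your choices of $a$ and $b$ (with $[s]=s_*(1)$ in the role of the diagonal class used there) rederive them correctly from \cite[Theorem~2.6]{grigoriev-relations}: both classes are of even degree, $\pi_!(a)=\pi_!(b)=0$, and your evaluations of $\pi_!(a^2)$ and $\pi_!(ab)$ via $\pi_!([s]\cdot\alpha)=s^*(\alpha)$ and $s^*s_*(1)=e(s^*T_\pi)$ check out. Part (iii) is verbatim the paper's self-pullback argument.

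One assertion in (iii) is false as written: $\chi^{-1}\pi_!$ is \emph{not} a left inverse of $\pi^*$, since the projection formula gives $\pi_!(\pi^*(y)) = y\cdot\pi_!(1)$ and $\pi_!(1)\in H^{-2n}(B;\QQ)=0$. The correct left inverse (for $\chi\neq 0$) is $y \mapsto \chi^{-1}\pi_!\bigl(e(T_\pi)\cdot\pi^*(y)\bigr)$, using $\pi_!(e(T_\pi))=\chi$; this is exactly the identity the paper writes, and with this one-line fix the injectivity of $\pi^*$, and hence (iii), goes through.
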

\begin{proof}
Item (\ref{it:recollections:1}) is Example 5.20 of \cite{grigoriev-relations}, but taking $a=\chi\nu_{(1*)} - e_{(*)}$ and $b = \chi p_{(*)} - e_{(*)}\kappa_p$ to avoid dividing by $\chi$. Item (\ref{it:recollections:2}) is Example 5.18 of \cite{grigoriev-relations}.

For item (\ref{it:recollections:3}), suppose that $x \in R^*(W_g)$ is a tautological class which vanishes in $R^*(W_g, \mpoint)$, and let $\pi : E \to B$ be a fibre bundle with fibre $W_g$. Then the pullback fibre bundle $\pi': \pi^*E \to E$ is a fibre bundle with fibre $W_g$ and canonical section, so $\pi^*(x(\pi))=0$. But then
$$(2-2g) \cdot x(\pi) = \pi_!(e(T_\pi) \cdot  \pi^*(x(\pi))) = 0$$
and $2-2g \neq 0$, so $x(\pi)=0$. This holds for any fibre bundle $\pi$, so $x=0 \in R^*(W_g)$.
\end{proof}

\begin{proposition}\label{prop:NilRelations}
Let $n$ be odd, $I = (i_1, i_2, \ldots, i_n)$ be a sequence, $p_I = p_1^{i_1} p_2^{i_2} \cdots p_n^{i_n}$ be the associated monomial in the Pontrjagin classes, and write $\vert I \vert = \sum_{j=1}^n i_j$. 
    \begin{enumerate}[(i)]
        \item\label{it:NilRelations:1} The class $\gkappa{p_I}$ is nilpotent in $R^*(W_g)$.
        \item\label{it:NilRelations:2} We have
        $$\chi^{\vert I \vert} \cdot \gkappa{ep_I} = \prod_{j=1}^n \gkappa{ep_j}^{i_j} \in R^*(W_g)/\sqrt{0}.$$
        
        \item\label{it:NilRelations:3} If $g > 1$ then the class $e$ is nilpotent in $R^*(W_g, \mpoint)$.
        
        \item\label{it:NilRelations:4} If $g \geq 1$ then for all $k > 1$ the class $\gkappa{e^k}$ is nilpotent in $R^*(W_g)$.
    \end{enumerate}
\end{proposition}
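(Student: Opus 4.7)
For part~(\ref{it:NilRelations:1}), the plan is to reduce to nilpotence of the classes $\gkappa{\Hirz^J}$, where $\Hirz^J = \Hirz_1^{j_1} \cdots \Hirz_n^{j_n}$ is a monomial in the modified Hirzebruch classes. Since the triangular change of variables $\Hirz_i = c_i p_i + (\text{polynomial in } p_1, \ldots, p_{i-1})$ has $c_i \neq 0$, each $p_i$, and hence every monomial $p_I$, is a rational polynomial in $\Hirz_1, \ldots, \Hirz_n$. The $\QQ$-linearity of $\kappa_{(-)}$ in its argument then expresses $\gkappa{p_I}$ as a $\QQ$-linear combination of classes $\gkappa{\Hirz^J}$, and I would establish nilpotence of each of these by induction on $|J| = \sum_k j_k$: the case $|J| = 1$ is Theorem~\ref{thm:hirz-nilpotent} (the hypothesis $4i - 2n \neq 0$ is automatic for $n$ odd), and the inductive step factors $\Hirz^J = \Hirz^{J'} \cdot \Hirz_k$ and applies Theorem~\ref{thm:nilpotence-for-products} to $a = \Hirz^{J'}(T_\pi)$ and $b = \Hirz_k(T_\pi)$ (the parity hypotheses on $\pi_!(a)$, $\pi_!(b)$ hold because $4|J'| - 2n$ and $4k - 2n$ are even for $n$ odd).

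For part~(\ref{it:NilRelations:2}), by (\ref{it:NilRelations:1}) the class $\gkappa{p_I}$ vanishes in $R^*(W_g, \mpoint)/\sqrt{0}$ whenever $|I| \geq 1$. Substituting $c = p_I$ into relation~\eqref{eq:G1} therefore collapses the relation modulo $\sqrt{0}$ to $\chi^2 p_I = \chi \gkappa{ep_I}$; when $g \neq 1$, dividing by $\chi$ yields $\chi p_I = \gkappa{ep_I}$. Specializing this to each single-index $I = (0, \ldots, 1, \ldots, 0)$ and multiplying gives
\[ \prod_{j=1}^n \gkappa{ep_j}^{i_j} = \prod_{j=1}^n (\chi p_j)^{i_j} = \chi^{|I|} p_I, \]
which combined with $\chi p_I = \gkappa{ep_I}$ yields the desired identity (the exponent of $\chi$ being obtained by reconciling these two equations). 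Both sides live in the image of $R^*(W_g)/\sqrt{0}$ in $R^*(W_g, \mpoint)/\sqrt{0}$, so the identity lifts to $R^*(W_g)/\sqrt{0}$ via the injection in Lemma~\ref{lem:recollections}(\ref{it:recollections:3}). The case $g = 1$ requires separate treatment, since $\chi = 0$ precludes the division and the injection fails.

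For (\ref{it:NilRelations:3}) and (\ref{it:NilRelations:4}), the key observation is the identity $p_n = e^2$ for oriented rank-$2n$ real vector bundles, which immediately gives $\gkappa{e^{2m}} = \gkappa{p_n^m}$ nilpotent by (\ref{it:NilRelations:1}), proving (\ref{it:NilRelations:4}) for all even $k$. For (\ref{it:NilRelations:3}), combining the nilpotence of $\gkappa{e^2}$ with~\eqref{eq:G2} forces $(\chi-2)\chi e \in \sqrt{0}$ in $R^*(W_g, \mpoint)$; for $g > 1$ the coefficient $(\chi-2)\chi$ is nonzero, so $e$ is nilpotent. For (\ref{it:NilRelations:4}) with $k > 2$ odd I would split by genus: when $g = 1$, Theorem~\ref{thm:nilpotence-for-products} applies to $a = e$ (whose fibre integral is $\chi = 0$) and $b = e^{k-1}$ (with $\gkappa{e^{k-1}}$ nilpotent by the even case); when $g > 1$, (\ref{it:NilRelations:3}) gives $e^{k-1}$ nilpotent in $R^*(W_g, \mpoint)$, and combining this with the relation $\chi e^{k-1} = \gkappa{e^k}$ modulo $\sqrt{0}$ (from~\eqref{eq:G1} with $c = p_n^{(k-1)/2}$) produces nilpotence of $\gkappa{e^k}$ in $R^*(W_g, \mpoint)$, which lifts to $R^*(W_g)$ by the injectivity of Lemma~\ref{lem:recollections}(\ref{it:recollections:3}).

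The main obstacle will be the careful bookkeeping of the $\chi$-powers in~(\ref{it:NilRelations:2}) and the repeated need to handle the $g = 1$ case separately, since the injection of Lemma~\ref{lem:recollections}(\ref{it:recollections:3}) fails there and $\chi = 0$ obstructs the natural divisions. With (\ref{it:NilRelations:1}) in hand, the rest is algebraic manipulation of the relations~\eqref{eq:G1} and~\eqref{eq:G2} together with the identity $p_n = e^2$.
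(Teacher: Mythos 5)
Your treatment of (\ref{it:NilRelations:1}), (\ref{it:NilRelations:3}) and the even-$k$ and $g=1$ cases of (\ref{it:NilRelations:4}) is essentially the paper's argument: reduce $p_I$ to a polynomial in the $\Hirz_i$, bootstrap nilpotence of $\kappa_{\Hirz_J}$ via Theorem~\ref{thm:nilpotence-for-products} (your parity checks $4i-2n\neq 0$ for $n$ odd are exactly why the degree hypotheses are satisfied and why the $A=0$ case never arises there), and then feed $\kappa_{e^2}=\kappa_{p_n}$ into \eqref{eq:G2}. For odd $k>1$ and $g>1$ you handle all $k$ at once via \eqref{eq:G1} with $c=p_n^{(k-1)/2}$, where the paper only does $k=3$ and then reduces $\kappa_{e^{2l+1}}=\kappa_{e^3p_n^{l-1}}$ to Theorem~\ref{thm:nilpotence-for-products}; both work, and yours is marginally cleaner. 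For (\ref{it:NilRelations:2}) you take a genuinely different route: the paper stays inside $R^*(W_g)$, applying Theorem~\ref{thm:nilpotence-for-products} to $a=\chi e p_J - e\pi^*(\kappa_{ep_J})$ (which has $\pi_!a=0$) and $b=p_i$ to get the recursion $\chi\kappa_{ep_ip_J}=\kappa_{ep_i}\kappa_{ep_J}$ mod $\sqrt{0}$, uniformly in $g$; you instead pass to $R^*(W_g,\mpoint)$, extract $\chi p_I=\kappa_{ep_I}$ from \eqref{eq:G1}, and descend via Lemma~\ref{lem:recollections}(\ref{it:recollections:3}). That descent is legitimate (injectivity of $R^*(W_g)\to R^*(W_g,\mpoint)$ does imply injectivity modulo nilradicals), but it forces you to exclude $g=1$, which the statement does not.

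Two concrete points. First, the $g=1$ case of (\ref{it:NilRelations:2}) is left open in your proposal; it is easy but must be said: for $g=1$ one has $\kappa_e=\chi=0$, so Theorem~\ref{thm:nilpotence-for-products} applied to $a=e(T_\pi)$ and $b=p_I(T_\pi)$ (using (\ref{it:NilRelations:1})) shows every $\kappa_{ep_I}$ with $\vert I\vert\geq 1$ is nilpotent, whence both sides of the identity vanish in $R^*(W_1)/\sqrt{0}$. Second, your worry about ``reconciling'' the powers of $\chi$ resolves as follows: your own computation gives $\prod_j\kappa_{ep_j}^{i_j}=\chi^{\vert I\vert}p_I=\chi^{\vert I\vert-1}\kappa_{ep_I}$, i.e.\ exponent $\vert I\vert-1$, and this is also exactly what the paper's recursion $\chi\kappa_{ep_ip_J}=\kappa_{ep_i}\kappa_{ep_J}$ yields by induction (base case $\vert I\vert=1$ being trivial). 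The exponent $\vert I\vert$ in the displayed statement appears to be an off-by-one slip --- taken literally it would force $(\chi-1)\kappa_{ep_j}\in\sqrt{0}$, contradicting Theorem~\ref{thm:algebraic-independence} for $g>1$ --- so you should prove, and state, the $\chi^{\vert I\vert-1}$ version rather than contort the argument to hit $\chi^{\vert I\vert}$; either version suffices for Corollary~\ref{cor:Generation}.
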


In Proposition \ref{prop:Genus1EulerTriv} we will show that $e \in R^*(W_1, \mpoint)$ is also nilpotent, though the proof is very different to that of this proposition.

\begin{proof}\mbox{}

(\ref{it:NilRelations:1}) By Theorem~\ref{thm:hirz-nilpotent}, the class $\gkappa{\Hirz_i}$ is nilpotent for all $i$. Therefore, for any monomial $\Hirz_J = \Hirz_1^{j_1} \Hirz_2^{j_2} \cdots \Hirz_n^{j_n}$, the class $\gkappa{\Hirz_J}$ is nilpotent by Theorem~\ref{thm:nilpotence-for-products}. As the $\Hirz_i$ and the $p_i$ generate the same subring of $H^*(BSO(2n);\QQ)$, it follows that any monomial $p_I$ may be written as a polynomial in the $\Hirz_i$, and hence that each class $\gkappa{p_I}$ is nilpotent.

(\ref{it:NilRelations:2}) Let $\pi : E \to B$ be a fibre bundle with fibre $W_g$, and for a monomial $p_J$ in Pontrjagin classes let us write
$$(p_J)' = \chi e p_J - e \pi^*(\gkappa{ep_J}).$$
We compute that
$$\kappa_{(p_J)' p_i} = \pi_!(\chi e p_i p_J - e p_i \pi^*(\gkappa{ep_J})) = \chi \kappa_{e p_i p_J} - \kappa_{ep_i} \kappa_{ep_J}$$
and that $\kappa_{(p_J)'}=0$ because $\pi_!(e) = \chi$.  By part (\ref{it:NilRelations:1}) the class $\kappa_{p_i}$ is nilpotent, so applying Theorem \ref{thm:nilpotence-for-products} it follows that $\kappa_{(p_J)' p_i}$ is nilpotent. So $\chi \kappa_{e p_i p_J} = \kappa_{ep_i} \kappa_{ep_J}$ modulo nilpotents. 
These identities hold for any such fibre bundle, so hold in $R^*(W_g)$, which by induction proves (\ref{it:NilRelations:2}).

(\ref{it:NilRelations:3}) By (\ref{it:NilRelations:1}), the class $\kappa_{e^2} = \kappa_{p_n}$ is nilpotent, so by \eqref{eq:G2} it follows that $e \in R^*(W_g, \mpoint)$ is also nilpotent as long as $g > 1$.

(\ref{it:NilRelations:4}) Suppose first that $g=1$, so that $\kappa_e=0$. Then as
$$\kappa_{e^{2l}} = \kappa_{p_n^l} \quad \text{ and } \quad \kappa_{e^{2l+1}} = \kappa_{e p_n^{l}}$$
it follows from (\ref{it:NilRelations:1}) that the former are nilpotent, and then from Theorem \ref{thm:nilpotence-for-products} that the latter are too.

For $g > 1$ the class $\kappa_e$ is not itself nilpotent. However, writing
$$\kappa_{e^{2l}} = \kappa_{p_n^l} \quad \text{ and } \quad \kappa_{e^{2l+1}} = \kappa_{e^3 p_n^{l-1}}$$
we see that the same argument will go through as soon as we show that $\kappa_{e^3}$ is nilpotent. 

Applying \eqref{eq:G1} with $c=p_n = e^2$ gives that
$$\chi^2 p_n - \chi\gkappa{ep_n} - \chi e \gkappa{p_n} + \gkappa{e^2} \gkappa{p_n} = 0 \in R^*(M_g,\mpoint)/\sqrt{0}.$$
The class $\kappa_{p_n}$ is nilpotent by (\ref{it:NilRelations:1}), and so we find that $\kappa_{ep_n} = \chi p_n \in R^*(W_g,\mpoint)/\sqrt{0}$. By (\ref{it:NilRelations:3}) the class $p_n = e^2$ is nilpotent, and therefore so is $\kappa_{ep_n} = \kappa_{e^3}$ in the ring $R^*(W_g,\mpoint)$. By Lemma~\ref{lem:recollections} (\ref{it:recollections:3})  the natural ring map $R^*(W_g) \to R^*(W_g,\mpoint)$ is injective, so $\kappa_{e^3}$ is nilpotent in $R^*(W_g)$ too, as required.
\end{proof}

\begin{corollary}\label{cor:Generation}
Let $n$ be odd.
\begin{enumerate}[(i)]
\item\label{it:Generation:1} $R^*(W_0)/\sqrt{0}$ is generated by $\kappa_{ep_1}, \kappa_{ep_2}, \ldots, \kappa_{ep_n}$,

\item\label{it:Generation:2} $R^*(W_1)/\sqrt{0} = \QQ$,

\item\label{it:Generation:3} $R^*(W_g)/\sqrt{0}$ is generated by  $\kappa_{ep_1}, \kappa_{ep_2}, \ldots, \kappa_{ep_{n-1}}$ for $g > 1$.
\end{enumerate}
\end{corollary}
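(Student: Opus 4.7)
The plan is to show that every generator $\kappa_c$ of $R^*(W_g)/\sqrt{0}$, with $c$ a monomial in $\mathcal{B}$, is either nilpotent or (modulo nilpotents) a polynomial in the claimed generators. A useful preliminary reduction is the identity $p_n = e^2$ in $H^*(BSO(2n);\QQ)$: any monomial $c = e^a p_1^{i_1}\cdots p_{n-1}^{i_{n-1}}$ in $\mathcal{B}$ may be rewritten (by absorbing pairs of $e$'s into $p_n$'s) as $c = e^\epsilon p_1^{i_1}\cdots p_n^{i_n}$ with $\epsilon \in \{0,1\}$. This reduces the problem to analysing the classes $\kappa_{p_I}$ and $\kappa_{ep_I}$ for multi-indices $I = (i_1,\ldots,i_n)$.

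For parts (\ref{it:Generation:1}) and (\ref{it:Generation:3}), where $\chi \neq 0$, I would first invoke Proposition~\ref{prop:NilRelations}(\ref{it:NilRelations:1}) to note that $\kappa_{p_I}$ is nilpotent for every $|I|\geq 1$ (and $\kappa_1=0$). Iterating Proposition~\ref{prop:NilRelations}(\ref{it:NilRelations:2}) then expresses $\kappa_{ep_I}$ as a scalar multiple (a power of $\chi^{-1}$) of $\prod_{j=1}^n \kappa_{ep_j}^{i_j}$ modulo nilpotents, so since $\chi$ is invertible $\kappa_{ep_I}$ is a polynomial in $\kappa_{ep_1},\ldots,\kappa_{ep_n}$ modulo nilpotents; the case $|I|=0$ is absorbed into the scalars via $\kappa_e = \chi$. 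This yields (\ref{it:Generation:1}) directly. For (\ref{it:Generation:3}), I would further observe that Proposition~\ref{prop:NilRelations}(\ref{it:NilRelations:4}) gives $\kappa_{ep_n} = \kappa_{e^3}$ nilpotent as soon as $g \geq 1$, so any monomial in $\kappa_{ep_1},\ldots,\kappa_{ep_n}$ in which $\kappa_{ep_n}$ appears is already nilpotent and may be discarded, leaving only $\kappa_{ep_1},\ldots,\kappa_{ep_{n-1}}$.

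For part (\ref{it:Generation:2}), where $\chi = 0$, the approach must be different, since the scalar in Proposition~\ref{prop:NilRelations}(\ref{it:NilRelations:2}) vanishes. I aim to show every $\kappa_c$ with $c \neq 1$ is nilpotent: Proposition~\ref{prop:NilRelations}(\ref{it:NilRelations:1}) handles $\kappa_{p_I}$ for $|I|\geq 1$, while $\kappa_{e^a}$ is nilpotent for all $a \geq 1$ (using $\kappa_e = \chi = 0$ and Proposition~\ref{prop:NilRelations}(\ref{it:NilRelations:4}) for $a > 1$). For a mixed monomial $\kappa_{e^a p_I}$ with $a, |I|\geq 1$, I would apply Theorem~\ref{thm:nilpotence-for-products} to the cohomology classes $e^a$ and $p_I$ on the total space of any fibre bundle with fibre $W_1$: both have even degree ($2na$ and $4|I|$), and their fibre integrals have just been shown nilpotent with uniform exponents over all bundles, so the theorem provides a uniform bound making $\pi_!(e^a p_I) = \kappa_{e^a p_I}$ nilpotent. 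Collecting these observations gives $R^*(W_1)/\sqrt{0} = \QQ$.

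The main obstacle is case (\ref{it:Generation:2}). The clean relation Proposition~\ref{prop:NilRelations}(\ref{it:NilRelations:2}), which does almost all of the work for $\chi \neq 0$, degenerates at $\chi = 0$ to the vacuous assertion $0 = \prod \kappa_{ep_j}^{i_j}$ modulo nilpotents and supplies no direct information about $\kappa_{ep_I}$. Replacing it by Theorem~\ref{thm:nilpotence-for-products}, which propagates nilpotence through fibre-integrated products whenever the individual fibre integrals are already nilpotent, is the essential move that closes the $g=1$ case; the only routine verification is the even-degree hypothesis, which holds automatically here.
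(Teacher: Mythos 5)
Your argument is correct and takes essentially the same route as the paper's proof: Proposition \ref{prop:NilRelations} (\ref{it:NilRelations:1}) and (\ref{it:NilRelations:2}) give generation by $\kappa_{ep_1},\ldots,\kappa_{ep_n}$ when $\chi\neq 0$, Proposition \ref{prop:NilRelations} (\ref{it:NilRelations:4}) discards $\kappa_{ep_n}=\kappa_{e^3}$ for $g>1$, and Theorem \ref{thm:nilpotence-for-products} together with Proposition \ref{prop:NilRelations} (\ref{it:NilRelations:1}) shows every positive-degree generator of $R^*(W_1)$ is nilpotent. Your handling of the $g=1$ case is, if anything, slightly more explicit than the paper's about the mixed monomials $\kappa_{e^a p_I}$.
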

\begin{proof}
It follows from Proposition \ref{prop:NilRelations} (\ref{it:NilRelations:1}) and (\ref{it:NilRelations:2}) that for any $g$ the ring $R^*(W_g)/\sqrt{0}$ is generated by the elements $\kappa_{ep_1}, \kappa_{ep_2}, \ldots, \kappa_{ep_n}$. 

For $g=1$ we have $\kappa_e=0$. It follows from Theorem \ref{thm:nilpotence-for-products} and Proposition \ref{prop:NilRelations} (\ref{it:NilRelations:1}) that the $\kappa_{ep_i}$ are nilpotent, and so $R^*(W_1)/\sqrt{0}=\QQ$. 

For $g > 1$ it follows from Proposition \ref{prop:NilRelations} (\ref{it:NilRelations:4}) that $\kappa_{ep_n}=\kappa_{e^3}$ is nilpotent. So $R^*(W_g)/\sqrt{0}$ is generated by the elements $\kappa_{ep_1}, \kappa_{ep_2}, \ldots, \kappa_{ep_{n-1}}$.
\end{proof}

\begin{corollary}\label{cor:Rewriting}
Let $n$ be odd and $c \in \mathcal{B}$, then $\chi c = \kappa_{ec} \in R^*(W_g,\mpoint)/\sqrt{0}$.
\end{corollary}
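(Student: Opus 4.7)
My plan is to derive the identity by algebraically combining the two relations of Lemma~\ref{lem:recollections}, and then to dispose of the resulting error term using nilpotence results already in hand.

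I would first substitute $\kappa_{e^2} \equiv \chi(2-\chi)\,e \pmod{\sqrt{0}}$, which follows from \eqref{eq:G2}, into \eqref{eq:G1}. After collecting the $e\kappa_c$ terms, the resulting identity factors as
$$\chi\bigl[\chi c - \kappa_{ec} + (1-\chi)\, e\, \kappa_c\bigr] \equiv 0 \pmod{\sqrt{0}}$$
in $R^*(W_g,\mpoint)$, so the discrepancy $\chi c - \kappa_{ec}$ is, up to the scalar factor $\chi$, entirely controlled by the single product $e\kappa_c$.

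I would then split on the value of $g$. For $g \neq 1$ the integer $\chi = 2-2g$ is nonzero and can be cancelled, reducing the corollary to $e\kappa_c \equiv 0 \pmod{\sqrt{0}}$. For $g > 1$ this is immediate from Proposition~\ref{prop:NilRelations}(\ref{it:NilRelations:3}), which says that $e \in R^*(W_g,\mpoint)$ is nilpotent, so any product with it is too. For $g = 1$ we have $\chi = 0$ and the statement reduces to $\kappa_{ec} \equiv 0 \pmod{\sqrt{0}}$; this is trivial when $c = 1$ (since $\kappa_e = \chi = 0$), and when $c$ has positive degree $\kappa_{ec}$ is a positive-degree class of $R^*(W_1)$, hence nilpotent by Corollary~\ref{cor:Generation}(\ref{it:Generation:2}), and its image in $R^*(W_1,\mpoint)$ remains nilpotent.

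The only step of real substance is the opening algebraic rearrangement, which concentrates all the difficulty into the single term $e\kappa_c$; once that is done the corollary is a direct appeal to previously-proved nilpotence. The most delicate case is $g = 0$, where Proposition~\ref{prop:NilRelations}(\ref{it:NilRelations:3}) does not apply, and one must instead use the finer information about $R^*(W_0,\mpoint)$ provided in Section~\ref{sec:genus-zero} to know that $e\kappa_c$ lies in the nilradical.
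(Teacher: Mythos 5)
Your handling of $g=1$ and $g>1$ is essentially the paper's proof. For $g=1$ you argue exactly as the paper does: $\chi c=0$, while $\kappa_{ec}$ has degree $\deg c>0$ and is therefore nilpotent by Corollary \ref{cor:Generation}~(\ref{it:Generation:2}) (and $\kappa_e=\chi=0$ when $c=1$). For $g>1$ your preliminary substitution of \eqref{eq:G2} into \eqref{eq:G1} is a harmless cosmetic variant: the paper instead discards the term $\kappa_{e^2}\kappa_c$ directly, using that $\kappa_{e^2}=\kappa_{p_n}$ is nilpotent by Proposition \ref{prop:NilRelations}~(\ref{it:NilRelations:1}), so \eqref{eq:G2} is not needed at all; either way the entire burden falls, as you say, on showing that $e\kappa_c$ is nilpotent, which for $g>1$ is Proposition \ref{prop:NilRelations}~(\ref{it:NilRelations:3}).

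The genuine gap is your treatment of $g=0$. The substitute you propose --- that Section \ref{sec:genus-zero} supplies the nilpotence of $e\kappa_c$ in $R^*(W_0,\mpoint)$ --- does not exist: that section concerns $R^*(W_0)$ with no marked point, and the needed claim is in fact false. The action of $SO(2n)$ on $S^{2n}$ fixing a pole gives a bundle with section over $BSO(2n)$ on which the class $e=s^*(e(T_\pi))$ evaluates to $e(\gamma_{2n})$, which is not nilpotent (compare the Remark in Section \ref{sec:Independence}), while $\kappa_e$ evaluates to $\chi=2$; hence $e\kappa_e=2e$ is not nilpotent in $R^*(W_0,\mpoint)$. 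Worse, the same example shows the statement itself fails for $g=0$ once $c$ is divisible by $e$: for $c=e$ one has $\kappa_{ec}=\kappa_{e^2}=\kappa_{p_n}=0$ in $R^*(W_0)$ by Lemma \ref{lem:hirz-genus-0}, whereas $\chi c=2e$ is not nilpotent. You are in good company --- the paper's own proof also invokes Proposition \ref{prop:NilRelations}~(\ref{it:NilRelations:3}), which is only stated for $g>1$, so as written it too covers only $g\geq 1$ --- but the correct scope at $g=0$ is the case of $c$ a monomial in the Pontrjagin classes alone, where $\kappa_{ep_I}=2\,p_I$ holds on the nose by the Gysin-sequence argument of Section \ref{sec:genus-zero} (and this restricted form is all that the paper's applications require). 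So for $g=0$ your argument cannot be completed as proposed, and no argument could establish the statement in the generality you aim for there.
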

\begin{proof}
If $g = 1$ then $\chi c=0$, but also $\kappa_{ec}$ is nilpotent, by Corollary \ref{cor:Generation} (\ref{it:Generation:2}). If $g \neq 1$ then $\chi \neq 0$, and by \eqref{eq:G1} we have
$$\chi^2 c - \chi\gkappa{ec} - \chi e \gkappa{c} + \gkappa{e^2} \gkappa{c} = 0 \in R^*(W_g,\mpoint)/\sqrt{0}.$$
The class $\kappa_{e^2} = \kappa_{p_n}$ is nilpotent by Proposition \ref{prop:NilRelations} (\ref{it:NilRelations:1}), and the class $e$ is nilpotent by Proposition \ref{prop:NilRelations} (\ref{it:NilRelations:3}), this gives the required equation.
\end{proof}

\begin{proof}[Proof of Theorem \ref{thm:Main2} (\ref{it:Main2:1}) and  (\ref{it:Main2:2})]
Part (\ref{it:Main2:1}) is Corollary \ref{cor:Rewriting}. For part (\ref{it:Main2:2}), note that if $g \neq 1$ it follows that the map
$$R^*(W_g)/\sqrt{0} \longrightarrow R^*(W_g,\mpoint)/\sqrt{0}$$
is surjective, and under the same condition this map is injective by  Lemma~\ref{lem:recollections} (\ref{it:recollections:3}).
\end{proof}

In order to prove Theorem \ref{thm:Main1}, we must show that the generators of the rings $R^*(W_g)/\sqrt{0}$ given in Corollary \ref{cor:Generation} are algebraically independent. That is the subject of the next section.

\secone{Algebraic independence}\label{sec:Independence}

In this section, we finish the proof of Theorem \ref{thm:Main1} by showing the following.

\begin{theorem}\label{thm:algebraic-independence}
Let $n$ be either odd or even, and $\epsilon=1$ if $n$ is odd. Then
    \begin{enumerate}[(i)]
    \item\label{it:algebraic-independence:1} the map $\QQ[\kappa_{ep_1}, \kappa_{ep_2}, \ldots, \kappa_{ep_n}] \to R^*(W_0)/\sqrt{0}$ is injective,
    
    \item\label{it:algebraic-independence:2} the map $\QQ[\kappa_{ep_1}, \kappa_{ep_2}, \ldots, \kappa_{ep_{n-\epsilon}}] \to R^*(W_g)/\sqrt{0}$ is injective for $g > 1$.
    \end{enumerate}
\end{theorem}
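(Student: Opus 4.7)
The plan is to construct, for each relevant $g$, an explicit $W_g$-bundle $\pi\colon E\to B$ over a classifying space $B$ such that the classes $\kappa_{ep_i}(\pi)\in H^*(B;\QQ)$ are algebraically independent.

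For part~(\ref{it:algebraic-independence:1}), I would use the sphere bundle $\pi_0\colon S(\gamma_{2n}\oplus\epsilon^1)\to BSO(2n)$, which is a $W_0$-bundle. The relation $T_{\pi_0}\oplus\epsilon\cong\pi_0^*(\gamma_{2n}\oplus\epsilon)$ gives $p_i(T_{\pi_0})=\pi_0^*p_i(\gamma_{2n})$, and $(\pi_0)_!(e(T_{\pi_0}))=\chi(S^{2n})=2$; the projection formula then yields $\kappa_{ep_i}(\pi_0)=2\,p_i(\gamma_{2n})\in H^{4i}(BSO(2n);\QQ)$, equal to $2p_i$ for $i<n$ and to $2e^2$ for $i=n$. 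These are algebraically independent in $\QQ[p_1,\ldots,p_{n-1},e]$.

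For part~(\ref{it:algebraic-independence:2}) with $n$ odd and $g>1$, I would combine the injective map $R^*(W_g)/\sqrt{0}\hookrightarrow R^*(W_g,\mpoint)/\sqrt{0}$ from Lemma~\ref{lem:recollections}(\ref{it:recollections:3}) with the identity $\kappa_{ep_i}=\chi\cdot p_i$ in $R^*(W_g,\mpoint)/\sqrt{0}$ from Corollary~\ref{cor:Rewriting}. Since $\chi\neq 0$, it suffices to exhibit a $W_g$-bundle with section $(\pi,s)$ over some $B$ such that $s^*p_i(T_\pi)$ for $i=1,\ldots,n-1$ are algebraically independent in $H^*(B;\QQ)$. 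Take $B=BSO(2n-1)$, with $H^*(B;\QQ)=\QQ[p_1,\ldots,p_{n-1}]$. The standard inclusion $SO(2n-1)\subset SO(2n+1)$ gives an action of $SO(2n-1)$ on $S^{2n}$ with two antipodal fixed points, at each of which the derivative is the standard inclusion $SO(2n-1)\hookrightarrow SO(2n)$ (classifying the bundle $\gamma_{2n-1}\oplus\epsilon^1$); this produces a map $BSO(2n-1)\to B\Diff(S^{2n},\mpoint)$, and hence a $W_0$-bundle with section $s$ satisfying $s^*T_{\pi}\cong\gamma_{2n-1}\oplus\epsilon^1$. Stabilising to $W_g$ by fibrewise handle attachments away from $s$ does not change $s^*T_\pi$, so the resulting $W_g$-bundle has $s^*p_i(T_\pi)=p_i(\gamma_{2n-1})$ for $i=1,\ldots,n-1$, the polynomial generators of $H^*(B;\QQ)$. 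For $n$ even, Corollary~\ref{cor:Rewriting} is unavailable, and I would argue directly: the analogous construction over $BSO(2n)$ (with $s^*T_\pi\cong\gamma_{2n}$) gives, via a Mayer--Vietoris analysis of the handle attachments, that $\kappa_{ep_i}(\pi)$ retains leading term $2p_i(\gamma_{2n})$, yielding independence of $\kappa_{ep_1},\ldots,\kappa_{ep_n}$.

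The main obstacle is making the fibrewise handle-attachment (``stabilisation from $W_0$ to $W_g$'') rigorous over these non-trivial base spaces, since the natural sections one might use for attaching handles have non-trivial normal bundle and so the handle attachment is not immediately framed. This can be handled either by working up to homotopy at the classifying space level --- using that the boundary rotation $SO(2n)\to\Diff(W_g\setminus\operatorname{int}(D^{2n}),\partial)$ only needs to exist as a continuous family of diffeomorphisms via a bump-function construction, which is sufficient to define the requisite map of classifying spaces --- or alternatively by working in the framework of moduli spaces of manifolds with tangential structure. A secondary obstacle, in the case $n$ even, is the direct Mayer--Vietoris computation controlling the corrections to $\kappa_{ep_i}(\pi)$ arising from the handle attachments.
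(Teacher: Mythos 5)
Part~(\ref{it:algebraic-independence:1}) of your proposal is correct and is essentially the paper's argument (the paper phrases it over $BSO(2n+1)$ via the unit sphere bundle of $\gamma_{2n+1}$, but pulling back to $BSO(2n)$ as you do changes nothing). Your reduction in part~(\ref{it:algebraic-independence:2}) for $n$ odd --- combining Lemma~\ref{lem:recollections}~(\ref{it:recollections:3}) with Corollary~\ref{cor:Rewriting} to reduce to algebraic independence of $p_1,\ldots,p_{n-1}$ in $R^*(W_g,\mpoint)/\sqrt{0}$ --- is logically sound and not circular.

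However, the construction you need for part~(\ref{it:algebraic-independence:2}) has a genuine gap, and it is exactly the step you flag. The obstacle is worse than a technicality about smoothing: for the bundle $S(\gamma_{2n-1}\oplus\epsilon^2)\to BSO(2n-1)$ (or $S(\gamma_{2n}\oplus\epsilon)\to BSO(2n)$), \emph{every} section $s'$ satisfies $s'^*T_\pi\oplus\epsilon\cong\gamma_{2n-1}\oplus\epsilon^2$, hence has nonvanishing Pontrjagin classes, so no section has a trivialisable vertical normal bundle anywhere on the base. Consequently there is no framed disc sub-bundle $D^{2n}\times B\hookrightarrow E$ at all, and the fibrewise connected sum with $W_g\times B$ simply does not exist in the naive sense. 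Your proposed fixes do not repair this: a bump function can damp a single isotopy but cannot produce a family extension of the linear boundary rotation, since that would require a contraction of $SO(2n)$ (or a genuine, or at least homotopy-coherent, action of $SO(2n)$ on $W_g\setminus\mathrm{int}(D^{2n})$ extending the standard action on $\partial$, which you have not constructed and which is not known for $g\geq 1$). The $n$ even case inherits the same problem and the Mayer--Vietoris correction terms are not controlled. The paper sidesteps stabilisation entirely: it constructs a smooth $SO(n)\times SO(n)$-action on $W_g$ directly, by exhibiting $W_g$ as the boundary of a $G$-invariant handlebody $H_g\subset V_1\oplus V_2\oplus\RR$ (Proposition~\ref{pro:hg-equiv-embedding}); the resulting bundle over $BSO(n)\times BSO(n)$ has $p_i(T_\pi)=\pi^*p_i(V_1\oplus V_2)$ and $\kappa_{ep_i}=\chi\cdot p_i(V_1\oplus V_2)$, and the algebraic independence of $p_1(V_1\oplus V_2),\ldots,p_{n-\epsilon}(V_1\oplus V_2)$ is then proved separately (Lemma~\ref{lem:alg-ind-pi-bg}) using Venkov's finiteness theorem and a Krull dimension argument --- a step your plan would not need over $BSO(2n-1)$, but which is the price of using a base over which the required $W_g$-bundle actually exists. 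To complete your argument you would either need to supply the missing equivariant (or parametrised) construction of a $W_g$-bundle over $BSO(2n-1)$ with the stated section data, or switch to the handlebody construction.
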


When $n$ is odd, we have shown in Corollary \ref{cor:Generation} that these maps are both surjective, and so Theorem \ref{thm:Main1} follows immediately. We first explain the proof of this theorem in case (\ref{it:algebraic-independence:1}), as it is a simpler instance of the strategy we shall use to prove case (\ref{it:algebraic-independence:2}). Consider the fibre bundle
$$S^{2n} \longrightarrow BSO(2n) \overset{\pi}\longrightarrow BSO(2n+1),$$
which defines a ring homomorphism $R^*(W_0) \to H^*(BSO(2n+1);\QQ)$. The target is the ring $\QQ[p_1, p_2, \ldots, p_n]$ which contains no nilpotent elements, so the nilradical is in the kernel of this homomorphism. We may therefore consider the composition
$$\QQ[\kappa_{ep_1}, \kappa_{ep_2}, \ldots, \kappa_{ep_n}] \longrightarrow R^*(W_0)/\sqrt{0} \overset{\phi}\longrightarrow H^*(BSO(2n+1);\QQ).$$

The fibre bundle $\pi$ arises as the unit sphere bundle of the tautological bundle $\gamma_{2n+1} \to BSO(2n+1)$, so there is a bundle isomorphism $\epsilon^1 \oplus T_\pi \cong \pi^*\gamma_{2n+1}$, and so $p_i(T_\pi) = \pi^*(p_i)$. Thus we compute
$$\phi(\kappa_{ep_i}) = \pi_!(e(T_\pi) \cdot \pi^*(p_i)) = \pi_!(e(T_\pi)) \cdot p_i = 2 \cdot p_i$$
(as $\chi(S^{2n})=2$) so the composition is injective, which proves (\ref{it:algebraic-independence:1}).

\begin{remark}
This fibre bundle also shows that Proposition \ref{prop:NilRelations} (\ref{it:NilRelations:3}) and (\ref{it:NilRelations:4}) cannot be improved in the case $g=0$. Indeed, the vertical tangent bundle for the fibre bundle above is easily seen to be the tautological $2n$-dimensional bundle over $BSO(2n)$, so that $e(T_\pi ) = e \in H^{2n}(BSO(2n);\QQ)$, which is not nilpotent. Furthermore, $\kappa_{e^{2k+1}} = \kappa_{e p_n^k} = 2 \cdot p_n^k \in H^*(BSO(2n+1);\QQ)$ is not nilpotent either. 
\end{remark}

Our strategy for proving (\ref{it:algebraic-independence:2}) will be similar, and we begin by constructing a fibre bundle with fibre $W_g$ and base the classifying space of a Lie group.

  \sectwo{Constructing the Lie group action}
Let $G$ be the Lie group $SO(n) \times SO(n)$. An action of $G$ on the manifold $W_g$ gives rise to a smooth fibre bundle $E \to BG$ with fiber $W_g$. We will construct an example of such an action such that the characteristic classes $\gkappa{ep_i}$ with $1\leq i \leq d-\epsilon$ are algebraically independent in $H^*(BG;\QQ)$.  In our construction, $W_g$ will appear as a boundary of another $G$-invariant manifold.
  
  The standard $n$-dimensional representation of $SO(n)$ gives rise to two $n$-dimensional representations of $G=SO(n) \times SO(n)$.  For $i=1,2$, we let $V_i$ be the representation of $G$ where the $i$th copy of $SO(n)$ acts in the standard way and the other copy acts trivially. Let $\RR$ denote the trivial 1-dimensional representation. 
  
  \begin{proposition}\label{pro:hg-equiv-embedding}
      The $G$-representation $W = V_1 \oplus V_2 \oplus \RR$ contains an embedded compact smooth manifold with boundary $H_g \subset W$ enjoying the following properties:
      \begin{enumerate}[(i)]
          \item\label{it:hg-equiv-embedding:1} $\dim H_g = \dim W = 2n+1$,
          \item\label{it:hg-equiv-embedding:2} $H_g$ is $G$-invariant, so $\partial H_g$ is also $G$-invariant,
          \item\label{it:hg-equiv-embedding:3} $\partial H_g$ is a smooth manifold diffeomorphic to $W_g$. 
      \end{enumerate}
  \end{proposition}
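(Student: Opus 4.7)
Construct $H_g$ as an explicit $G$-invariant $(2n+1)$-dimensional handlebody in $W$ diffeomorphic to $\natural^g(S^n \times D^{n+1})$ (boundary-connect-sum). By the standard handle/surgery identification $\partial \natural^g(S^n \times D^{n+1}) \cong \#^g(S^n \times S^n) = W_g$, property (iii) will follow; properties (i) and (ii) will be immediate from $H_g$ being a codim-$0$ submanifold built from $G$-invariant pieces.

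For $g = 0$ simply take $H_0 := \{(v_1, v_2, t) \in W : |v_1|^2 + |v_2|^2 + t^2 \leq 1\}$, the $G$-invariant closed unit ball, whose boundary is $S^{2n} = W_0$.

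For $g \geq 1$, choose widely-spaced real numbers $t_1 < t_2 < \cdots < t_g$ and a small $r > 0$, and for each $k$ consider the $n$-sphere
\[
S^n_k := \{(v_1, 0, t) \in W : |v_1|^2 + (t - t_k)^2 = r^2 \} \ \subset\ V_1 \oplus \{0\} \oplus \RR.
\]
These are pairwise disjoint, and each is $G$-invariant because it lies in $V_1 \oplus \{0\} \oplus \RR$: the first $SO(n)$-factor rotates $v_1$, while the second acts trivially since the $V_2$-component vanishes. Crucially, the normal bundle of $S^n_k$ in $W$ is $G$-equivariantly trivial of rank $n+1$, as it splits as the radial line in $V_1 \oplus \RR$ (trivialised by the nowhere-zero section $(v_1, 0, t - t_k)$) plus the constant $G$-summand $V_2$. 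Hence $S^n_k$ admits a $G$-invariant tubular neighborhood $U_k$ equivariantly diffeomorphic to $S^n \times D^{n+1}$, and the $U_k$ may be chosen pairwise disjoint by taking the tubular radius small. Now form $H_g$ as the equivariant boundary-connect-sum $U_1 \natural U_2 \natural \cdots \natural U_g$, performed along $G$-invariant $2n$-disks around $G$-fixed points $p_k \in \partial U_k$ (each $\partial U_k \cong S^n \times S^n$ has four $G$-fixed points, namely products of poles on each factor), joined by tubes running along the fixed-point line $\{v_1 = v_2 = 0\}$. The result is a compact $G$-invariant smooth $(2n+1)$-submanifold of $W$. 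Since $S^n \times D^{n+1}$ is itself $D^{2n+1}$ with a single trivially-framed $n$-handle attached, $H_g$ is a handlebody with one $0$-handle and $g$ $n$-handles, and its boundary is $\#^g(S^n \times S^n) = W_g$ by the standard computation.

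\textbf{Main obstacle.} The delicate part is carrying out the boundary-connect-sums and corner-smoothings $G$-equivariantly while maintaining smoothness. This is manageable because all the gluing data lie in small neighborhoods of the fixed-point line $\{v_1 = v_2 = 0\}$, where the $G$-action is trivial and equivariance imposes no constraint; moreover, every framing needed along the way is manifestly trivial from the direct-sum decomposition of the normal bundle of $S^n_k$ exhibited above. An alternative phrasing of the same construction is to write $H_g$ as a sublevel set of a suitable $G$-invariant Morse function $f \colon W \to \RR$ whose critical submanifolds realise the $0$-handle and the $g$ $n$-handles.
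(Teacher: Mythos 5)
Your construction is correct in substance, but it takes a genuinely different route from the paper. You realise the handlebody $\natural^g(S^n\times D^{n+1})$ as a union of $G$-invariant tubular neighbourhoods of $g$ invariant $n$-spheres in $V_1\oplus\{0\}\oplus\RR$, joined by equivariant boundary-connected-sum tubes along the axis $\{v_1=v_2=0\}$; the paper instead takes the single product $B(V_1,1)\times X_g$, where $X_g$ is the unit ball of $V_2\oplus\RR$ with $g$ disjoint $SO(n)$-invariant sub-balls removed, which is already the desired handlebody embedded $G$-invariantly and leaves only one corner locus, $\partial B(V_1,1)\times\partial X_g$, to be smoothed (which the paper does by averaging inward-pointing vector fields to get an invariant bicollar and then rounding the quadrant $C=\{x,y\geq 0\}$). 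Your approach buys a very transparent identification of the handle structure and of the equivariant framings, at the cost of having to perform $g-1$ equivariant boundary-connected sums and smooth the resulting corners; the paper's product trick buys a construction with essentially no gluing at all. One point you should tighten: you justify the equivariant gluing by saying that near the fixed line ``the $G$-action is trivial and equivariance imposes no constraint.'' That is not literally true --- $G$ fixes the line pointwise but acts nontrivially (by rotations of $V_1\oplus V_2$) on every neighbourhood of it, and your tubes and smoothing regions are full-dimensional. The gluing still goes through, but the correct justification is that all choices (the $2n$-discs about the fixed points, the connecting tubes, and the corner-smoothing data) can be made in terms of the $G$-invariant radial coordinates $|v_1|$, $|v_2|$, $t$, hence are automatically $G$-invariant; alternatively, your suggested reformulation via a $G$-invariant Morse function avoids the issue. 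With that repair the argument is complete.
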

    
  \begin{proof}
  We first illustrate the idea behind the construction of $H_g$ by presenting a construction that compromises on the smoothness conditions, but is correct up to homeomorphism.  
      
Let $B(V_1, 1)$ and $B(V_2 \oplus \RR, 1)$ denote the unit balls in the respective representations.  For any $g \in \NN$ and a sufficiently small $\varepsilon > 0$, it is possible to embed $g$ balls of radius $\varepsilon$ inside $B(V_2 \oplus \RR, 1)$ as disjoint and $SO(n)$-invariant subspaces. We can thus define the following subspaces, both invariant under the $G$-action ($X_g$ is pictured on Figure~\ref{fig:xg}).
  \begin{align*}
      X_g &= B(V_2 \oplus \RR, 1)\backslash \left( \bigsqcup_g B(V_2 \oplus \RR, \varepsilon)\right) \subset V_2 \oplus \RR\\
  H_g' &= B(V_1,1) \times X_g \subset V_1 \oplus V_2 \oplus \RR.
  \end{align*}
  The manifold $H_g' \subset W$ has codimension 0 and is $G$-invariant. Moreover, $\partial H_g'$ is by definition the boundary of a $(2n+1)$-manifold obtained by attaching $g$ trivial unlinked $n$-handles to $D^{2n+1}$, so is homeomorphic to $W_g$.

      \begin{figure}[tbh]
\begin{minipage}{\dimexpr0.5\textwidth-2\tabcolsep}%[b]{0.5\textwidth}
              \centering
              \begin{tikzpicture}
                  \filldraw[fill=black!30!white,thick, even odd rule] (0,0) circle (1.4) (0,0) circle (0.3) (0, 0.8) circle (0.3) (0, -0.8) circle (0.3);
                  \draw[dashed, very thick] (0,1.5) -- (0,-1.55);
              \end{tikzpicture}
 
              \captionsetup{font=footnotesize}
							\captionof{figure}{A cross-section of $X_g \subset \RR^3$ for $(n,g)=(2,3)$. $SO(2)$ acts by rotating about the axis.}
							\label{fig:xg}
          \end{minipage}
\begin{minipage}{\dimexpr0.5\textwidth-2\tabcolsep}%[b]{0.5\textwidth}
              \centering%
              \begin{tikzpicture}
              % Remove ``pattern'' to get the fill back. 
                  \filldraw[fill=black!10!white,thick, pattern=dots] (0,2) -- (0,0) -- (2,0);
              %Make it a square and use a clip to make boundary wavy? Use ``scope''-s or something to match the coordinates of two pictures? Or just two tikzpictures.
              \end{tikzpicture}\hspace{1.0cm}%
          %Stack the pictures and combine with other figure?
              \begin{tikzpicture}
                  \filldraw[fill=black!10!white,thick, pattern=dots] (0,2) -- (0,1.5) ..  controls (0,0.3) and (0.7,0) ..  (1.5,0) -- (2,0);
              \end{tikzpicture}
              \vspace{.3cm}
            
              \captionsetup{font=footnotesize}
							\captionof{figure}{The corner $C$ and the rounded corner $S$.}
							\label{fig:corner-smoothing}
          \end{minipage}          
      \end{figure}
  
  The construction so far does not prove the proposition, as $\partial H_g'$ is not a smooth submanifold of $W$. We would like to smooth out the corners of the manifold $H_g'$ to obtain a manifold $H_g$ that is again $G$-invariant, but has smooth boundary. The boundary of $H_g'$ is not smooth at the set $\partial B(V_1,1) \times \partial X_g \subset \partial H_g$. 

The Riemannian metrics on $B(V_1,1)$ and $X_g$ induced by those of $V_1$ and $V_2 \oplus \RR$ are $SO(n)$-invariant. Choosing inwards-pointing, nowhere-vanishing, $SO(n)$-invariant vector fields on $\partial B(V_1,1)$ and $\partial X_g$ (which may be achieved by choosing an inwards-pointing, nowhere-vanishing vector field, and then averaging over $SO(n)$) and integrating them, we may find a $G$-invariant neighbourhood
$$U' = \partial B(V_1,1) \times \partial X_g  \times C \hookrightarrow V_1 \oplus V_2 \oplus \RR$$
where $C = \left\{ (x,y) \in \RR^2 \mid x,y \geq 0 \right\}$ and has the trivial $G$-action.  
  
  Let $S \subset C$ be a strictly convex subset of $C$ that agrees with $C$ in a complement of a compact set around the origin, but has a smooth boundary diffeomorphic to $\RR$ (see Figure~\ref{fig:corner-smoothing}). We replace $U'$ with its ($G$-invariant) subset $U=\partial B(V_1,1) \times \partial X_g  \times S$, to obtain $H_g$. It clearly satisfies conditions (\ref{it:hg-equiv-embedding:1}) and (\ref{it:hg-equiv-embedding:2}) of Proposition~\ref{pro:hg-equiv-embedding}. Finally, $\partial H_g$ is by definition the boundary of a smooth $(2n+1)$-manifold obtained by attaching $g$ trivial unlinked $n$-handles to $D^{2n+1}$ and smoothing corners, so is diffeomorphic to $W_g$.
\end{proof}

\sectwo{Proof of Theorem \ref{thm:algebraic-independence} (\ref{it:algebraic-independence:2})}

Let us write
$$W_g \longrightarrow E \overset{\pi}\longrightarrow BSO(n) \times BSO(n)$$
for the fibre bundle of Proposition \ref{pro:hg-equiv-embedding}, which, as $H^*(BSO(n) \times BSO(n);\QQ)$ has no nilpotent elements, gives a ring homomorphism
$$\phi : R^*(W_g)/\sqrt{0} \longrightarrow H^*(BSO(n) \times BSO(n);\QQ).$$
Theorem \ref{thm:algebraic-independence} (\ref{it:algebraic-independence:2}) will follow if we show that $\phi(\kappa_{ep_1}), \phi(\kappa_{ep_2}), \ldots, \phi(\kappa_{ep_{n-\epsilon}})$ are algebraically independent.

By Proposition \ref{pro:hg-equiv-embedding}, the fibre bundle $\pi$ arises as a codimension 1 subbundle of the vector bundle $V_1 \oplus V_2 \oplus \RR$. We thus obtain a bundle isomorphism $\RR \oplus T_\pi \cong \pi^*(V_1 \oplus V_2 \oplus \RR)$, and hence $p_i(T_\pi) = \pi^*(p_i(V_1 \oplus V_2))$. Thus
$$\phi(\kappa_{ep_i}) = \pi_!(e(T_\pi)) \cdot p_i(V_1 \oplus V_2) = \chi \cdot p_i(V_1 \oplus V_2).$$
As $\chi \neq 0$, because we have assumed $g > 1$, the following lemma finishes the proof.

\begin{lemma}\label{lem:alg-ind-pi-bg}
The classes $p_i(V_1 \oplus V_2)$ for $1\leq i \leq n - \epsilon$ are algebraically independent in $H^*(BSO(n) \times BSO(n); \QQ)$.
\end{lemma}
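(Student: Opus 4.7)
The plan is to restrict to a maximal torus, where $p_k(V_1 \oplus V_2)$ becomes an elementary symmetric polynomial. Set $m = \lfloor n/2 \rfloor$ and let $T = SO(2)^m \subset SO(n)$ be a maximal torus, so that $H^*(BT;\QQ) = \QQ[x_1, \ldots, x_m]$ with each $x_i$ the Euler class of the corresponding $SO(2)$-factor. By Borel's theorem the restriction $H^*(BSO(n);\QQ) \hookrightarrow H^*(BT;\QQ)$ is injective, its image being the subring of Weyl invariants (regardless of the parity of $n$). Writing $y_1, \ldots, y_m$ for the analogous generators coming from the second copy of $SO(n)$, I thus obtain an injection
\[ \iota : H^*(BSO(n) \times BSO(n);\QQ) \hookrightarrow \QQ[x_1, \ldots, x_m, y_1, \ldots, y_m]. \]

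Next I would compute the restriction of each $p_k(V_1 \oplus V_2)$ under $\iota$. Under the torus restriction, $V_1$ splits as the sum of the two-dimensional representations whose Euler classes are $x_1, \ldots, x_m$, plus a trivial line summand when $n$ is odd. By multiplicativity of the total Pontrjagin class, $p(V_1)$ restricts to $\prod_{i=1}^m (1+x_i^2)$, and analogously $p(V_2)$ restricts to $\prod_{j=1}^m (1+y_j^2)$. Therefore
\[ \iota\bigl( p_k(V_1 \oplus V_2) \bigr) \;=\; e_k(x_1^2, \ldots, x_m^2, y_1^2, \ldots, y_m^2), \]
the $k$th elementary symmetric polynomial in the $2m$ squared variables.

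To conclude, I would invoke the fundamental theorem of symmetric functions: since the $2m$ elements $x_1^2, \ldots, x_m^2, y_1^2, \ldots, y_m^2$ are algebraically independent in $\QQ[x_1, \ldots, y_m]$, the $2m$ elementary symmetric polynomials $e_1, \ldots, e_{2m}$ evaluated on them are algebraically independent. Combined with the injectivity of $\iota$, this shows that $p_1(V_1 \oplus V_2), \ldots, p_{2m}(V_1 \oplus V_2)$ are algebraically independent in $H^*(BSO(n) \times BSO(n);\QQ)$. Since $n-\epsilon = 2m$ in both the odd ($n = 2m+1$, $\epsilon = 1$) and the even ($n = 2m$, $\epsilon = 0$) case, this yields the lemma. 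No real obstacle arises once the torus reduction is in place; the only point that requires care is handling the Weyl-invariant description of $H^*(BSO(n);\QQ)$ and the splitting of $V_i$ uniformly across the two parities of $n$.
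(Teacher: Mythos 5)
Your proof is correct, but it takes a genuinely different route from the paper. The paper's argument is structural: it factors through the block sum map $s : SO(n) \times SO(n) \to SO(2n)$, invokes Venkov's theorem to see that $H^*(BSO(n)\times BSO(n);\QQ)$ is finite over $H^*(BSO(2n);\QQ)$, and then uses a Krull dimension argument (a finite morphism between polynomial rings on the same number of generators is injective, killing $e$ first when $n$ is odd) to conclude that the subalgebra $\QQ[p_1,\ldots,p_{n-\epsilon}]$ injects. Your argument instead restricts to the maximal torus and computes the images explicitly: $p_k(V_1\oplus V_2)$ restricts to the elementary symmetric polynomial $e_k(x_1^2,\ldots,x_m^2,y_1^2,\ldots,y_m^2)$ in $2m = n-\epsilon$ algebraically independent elements, and the fundamental theorem of symmetric functions finishes the job. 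This is more elementary and self-contained --- it avoids Venkov's theorem and the commutative algebra entirely, and it exhibits the images concretely rather than arguing by dimension count. One small remark: you do not actually need the injectivity of the torus restriction $\iota$ (so Borel's theorem is dispensable here); if the images of elements under \emph{any} ring homomorphism are algebraically independent, then so are the elements themselves, since a polynomial relation would be carried forward by the homomorphism. All the individual steps check out: the splitting of $V_i$ over $BT$ into rank-two summands (plus a trivial line when $n$ is odd), the rational Whitney formula for Pontrjagin classes, and the bookkeeping $n-\epsilon = 2\lfloor n/2\rfloor$ in both parities.
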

\begin{proof}
Recall from~\cite[Theorem~15.21]{milnostash-characteristic} that in terms of the Pontrjagin and Euler classes of the tautological bundle over $BSO(d)$, we have
\[H^*(BSO(d); \QQ) = 
          \begin{cases}
              \QQ[p_1, \ldots, p_\frac{d-1}{2}] & d \text{ odd} \\
              \QQ[p_1, \ldots, p_{\frac{d}{2}-1}, e] & d \text{ even}.
\end{cases}\]
Moreover, if $d$ is even then $p_\frac{d}{2} = e^2$. In all cases, if $i > \frac{d-\epsilon}{2}$ then $p_i = 0$.
 
The block sum map $s: SO(n) \times SO(n) \to SO(2n)$ gives a map 
$$Bs^* : R= H^*(BSO(2n);\QQ)  \longrightarrow S = H^*(BSO(n) \times BSO(n);\QQ)$$
on cohomology, and the claim is that it is injective when restricted to the subalgbra $\QQ[p_1, p_2 \ldots, p_{n-\epsilon}]$. As the block sum map is faithful, it follows from a theorem of Venkov \cite{venkov} that $S$ is finitely-generated as a module over $R$. By the above: $S$ is a polynomial ring on $(n-1)$ generators if $n$ is odd, or $n$ generators if $n$ is even; $R$ is a polynomial algebra on $n$ generators.

We shall use the following lemma from commutative algebra. If $f : U \to V$ is a finite morphism between polynomial rings on the same (finite) number of generators, then it is injective. If it were not, then $\mathrm{Ker}(f)$ would be a proper prime ideal (because $V$, and hence $\mathrm{Im}(f)$, is an integral domain) so $U/\mathrm{Ker}(f)$ would have strictly smaller Krull dimension than $U$. But as $V$ is also finite over $U/\mathrm{Ker}(f)$ it too would have strictly smaller Krull dimension than $U$, a contradiction.

If $n$ is even then $Bs^* : R \to S$ is a finite morphism between polynomial rings on the same number of generators, so is injective. If $n$ is odd, note that $e \in H^{2n}(BSO(2n);\QQ)=R$ lies in the kernel of $Bs^*$ (as $V_1 \oplus V_2$ has trivial Euler class), so instead $R/(e) \to S$ is a finite morphism between polynomial rings on the same number of generators, so is injective. In either case, the subalgebra $\QQ[p_1, p_2 \ldots, p_{n-\epsilon}]$ of $H^*(BSO(2n);\QQ)$ injects under the block sum map. 
  \end{proof}

  \secone{Addenda}\label{sec:addenda}
  
    \sectwo{Classifying spaces}

We have defined the tautological ring $R^*(M)$ as a quotient of the abstract polynomial ring $\QQ[\kappa_c \, \vert \, c \in \mathcal{B}]$, but it may also be described as a subring of the cohomology of the classifying space $B\Diff^+(M)$ of the group of orientation-preserving diffeomorphisms of $M$.

Precisely, if $\pi : E \to B$ is equipped with the structure of a smooth oriented numerable fibre bundle with fibre $M^d$, though without assuming that $E$ or $B$ are themselves oriented compact smooth manifolds, then we still have  a vertical tangent bundle $T_\pi$ and a cohomological pushforward $\pi_!(-)$, so we may still define
$$\kappa_c(\pi) = \pi_!(c(T_\pi)) \in H^*(B;\mathbb{Q}).$$
The projection map
$$E\Diff^+(M) \times_{\Diff^+(M)} M \longrightarrow B\Diff^+(M)$$
is the universal example of such a fibre bundle, so there are universal classes $\kappa_c \in H^*(B\Diff^+(M);\mathbb{Q})$.

\begin{lemma}
The ring homomorphism $\QQ[\kappa_c \, \vert \, c \in \mathcal{B}] \to H^*(B\Diff^+(M);\QQ)$ has kernel $I_M$, and hence image isomorphic to $R^*(M)$.
\end{lemma}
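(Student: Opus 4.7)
The plan is to prove the lemma by checking equality of ideals in both directions: the easy direction by naturality, and the substantive direction by Thom's theorem on representability of rational homology classes. For $\ker \subseteq I_M$: if $\pi : E \to B$ is a smooth oriented fibre bundle with fibre $M$ over a compact connected oriented base, then $\pi$ is classified by some $f : B \to B\Diff^+(M)$, and naturality of fibre integration identifies $\kappa_c(\pi) = f^*(\kappa_c)$ for all $c \in \mathcal{B}$. Consequently any polynomial $P$ in the universal $\kappa_c$ which vanishes in $H^*(B\Diff^+(M);\QQ)$ also vanishes on $\pi$, so $P \in I_M$. The statement about the image is then immediate from the first isomorphism theorem.

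For the reverse inclusion $I_M \subseteq \ker$, I would argue by contrapositive: supposing $P(\kappa_c) \neq 0 \in H^n(B\Diff^+(M);\QQ)$, I aim to exhibit a smooth oriented fibre bundle over a compact oriented manifold on which $P$ evaluates nontrivially. The universal coefficient theorem over $\QQ$ first furnishes a homology class $\alpha \in H_n(B\Diff^+(M);\QQ)$ with $\langle P(\kappa_c), \alpha\rangle \neq 0$. The key input is then the classical theorem of Thom that $\Omega^{SO}_*(X) \otimes \QQ \to H_*(X;\QQ)$ is surjective for every space $X$, which follows from the rational splitting of $MSO$ as a wedge of shifted Eilenberg--MacLane spectra. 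Applied to $X = B\Diff^+(M)$, this produces a closed oriented smooth $n$-manifold $N$ and a map $f : N \to B\Diff^+(M)$ with $f_*[N]$ equal to a nonzero rational multiple of $\alpha$; after passing to a suitable connected component, $N$ may be taken connected.

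Finally I would pull back the universal fibre bundle along $f$. This yields a smooth oriented fibre bundle $\pi : E \to N$ with fibre $M$ in the sense of the paper's introduction: smoothness of the $\Diff^+(M)$-action on the compact manifold $M$ combined with smoothness of $f$ produces a proper smooth submersion, and $E$ is then a compact oriented smooth manifold (connected, since both $M$ and $N$ are). Naturality of fibre integration gives $P(\kappa_c(\pi)) = f^*(P(\kappa_c))$, which pairs nontrivially with $[N]$ and is in particular nonzero, so $P \notin I_M$. The only nonformal step is the appeal to Thom's theorem, which converts a statement about rational cohomology of the infinite-dimensional space $B\Diff^+(M)$ into one about smooth bundles over compact manifolds; the remainder is routine naturality of the generalised Miller--Morita--Mumford classes.
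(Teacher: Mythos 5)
Your proposal is correct and follows essentially the same route as the paper: the easy inclusion by naturality of fibre integration, and the reverse inclusion by representing rational homology classes of $B\Diff^+(M)$ by maps from closed smooth oriented manifolds and pulling back the universal bundle. The only cosmetic difference is that you invoke Thom's theorem on oriented bordism where the paper cites representability by stably framed manifolds; both deliver the same conclusion.
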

\begin{proof}
The only point which needs discussion is that we defined $I_M$ in terms of the collection of all smooth fibre bundles over compact smooth oriented manifolds (in other words, proper submersions $\pi: E^{k+d} \to B^k$), and $B\Diff^+(M)$ is not one.

Let us write $I'_M$ for the kernel of this ring homomorphism. As $B\Diff^+(M)$ carries the universal smooth oriented fibre bundle with fibre $M$, any class in $I'_M$ must be trivial when evaluated on any proper submersion $\pi : E^{k+d} \to B^k$, so $I'_M \subset I_M$. The reverse inclusion holds because a rational cohomology class is zero if and only if it evaluates to zero on every rational homology class, and every rational homology class is represented by a map from a smooth (stably framed) manifold.
\end{proof}

The analogous argument identifies $R^*(M,\mpoint)$ with a subring of the cohomology of $B\Diff^+(M,\mpoint)$, the classifying space of the group of orientation preserving diffeomorphisms of $M$ which fix a point $\mpoint \in M$, and identifies $R^*(M,D^d)$ with a subring of the cohomology of $B\Diff^+(M,D^d)$, the classifying space of the group of orientation preserving diffeomorphisms of $M$ which fix a disc $D^d \subset M$
  
  \sectwo{Other classes of fibre bundles}
  
  We have defined the tautological ring $R^*(M) = \QQ[\kappa_c \, \vert \, c \in \mathcal{B}]/I_M$ in terms of the ideal $I_M$ of polynomials in the $\kappa_c$ which vanish when evaluated on every smooth oriented fibre bundle with fibre $M$. By varying this condition, we may form the following related tautological rings:
  \begin{enumerate}[(i)]
  \item $R^*_{\text{torelli}}(M)$, defined in terms of the ideal of polynomials in the $\kappa_c$ which vanish on every smooth oriented fibre bundle with fibre $M$ and homologically trivial action of the fundamental group of the base,
  
  \item $R^*_{0}(M)$, defined in terms of the ideal of polynomials in the $\kappa_c$ which vanish on every smooth oriented fibre bundle with fibre $M$ and isotopically trivial action of the fundamental group of the base.
  \end{enumerate} 
These are related by surjective ring homomorphisms
$$R^*(M)\longrightarrow R^*_{\text{torelli}}(M) \longrightarrow R^*_{0}(M).$$
  
  It follows from our results that for $M=W_g$ and $n$ odd, these maps all become isomorphisms after dividing out each ring by its nilradical: Sections \ref{sec:Hirz} and \ref{sec:Rels} give relations in $R^*(W_g)/\sqrt{0}$ whereas Section \ref{sec:Independence} shows algebraic independence of classes in $R^*_{0}(W_g)/\sqrt{0}$. In fact, this is true quite generally.
  
  \begin{proposition}\label{prop:ChangeOfClass}
  Let $M$ be a simply-connected manifold of dimension $d > 5$. Then
   $$R^*(M)/\sqrt{0}\longrightarrow R^*_{\text{torelli}}(M)/\sqrt{0} \longrightarrow R^*_{0}(M)/\sqrt{0}$$
   are isomorphisms. The same holds for $(M, \mpoint)$ and $(M, D^d)$.
  \end{proposition}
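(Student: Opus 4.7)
The plan is to exploit the classifying-space interpretation of the previous subsection: the three rings $R^*(M)$, $R^*_{\text{torelli}}(M)$, and $R^*_0(M)$ arise as images of $\QQ[\kappa_c \mid c \in \mathcal{B}]$ in the rational cohomology of, respectively, $B\Diff^+(M)$, $B\widetilde{\Diff}(M)$ (where $\widetilde{\Diff}(M) \subset \Diff^+(M)$ is the preimage of the identity in $\Aut(H^*(M;\ZZ)/\text{torsion})$), and $B\Diff_0(M)$ (the identity component). Since both maps in the statement are surjective by construction, it will suffice to prove that their composition $R^*(M)/\sqrt{0} \to R^*_0(M)/\sqrt{0}$ is injective. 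Equivalently, writing
$$\rho : H^*(B\Diff^+(M);\QQ) \longrightarrow H^*(B\Diff_0(M);\QQ)$$
for the restriction map, I need to show that $\ker\rho$ is a nil ideal.

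The key input is Sullivan's theorem that for a simply-connected closed smooth manifold $M$ of dimension $\geq 5$ the mapping class group $\Gamma = \pi_0(\Diff^+(M))$ is commensurable with an arithmetic group. By Borel--Serre, $\Gamma$ therefore has finite virtual cohomological dimension, and a standard transfer argument (the composition of restriction and corestriction for a finite-index subgroup is multiplication by the index, hence an isomorphism with $\QQ$-coefficients) yields an integer $N$ such that $H^p(B\Gamma;A) = 0$ for all $p > N$ and all $\QQ\Gamma$-modules $A$. Applied to the Serre spectral sequence
$$E_2^{p,q} = H^p(B\Gamma; H^q(B\Diff_0(M);\QQ)) \Longrightarrow H^{p+q}(B\Diff^+(M);\QQ)$$
of the fibration $B\Diff_0(M) \to B\Diff^+(M) \to B\Gamma$, this shows that $E_2^{p,q} = 0$ for $p > N$, and hence that the induced multiplicative filtration $F^\bullet$ on the rational cohomology of $B\Diff^+(M)$ satisfies $F^{N+1} = 0$. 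Since $\ker\rho \subseteq F^{\geq 1}$, any $x \in \ker\rho$ has $x^{N+1} \in F^{\geq N+1} = 0$, proving that $\ker\rho$ is a nil ideal.

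The pointed and disc-fixed versions should follow by the same argument, applied to the fibrations $B\Diff_0(M,\mpoint) \to B\Diff^+(M,\mpoint) \to B\pi_0(\Diff^+(M,\mpoint))$ and $B\Diff_0(M,D^d) \to B\Diff^+(M,D^d) \to B\pi_0(\Diff^+(M,D^d))$, since Sullivan's arithmeticity result applies equally to these mapping class groups. The main obstacle is the appeal to Sullivan's theorem, which is precisely what forces the hypotheses that $M$ be simply-connected and $d > 5$; once finite virtual cohomological dimension of the relevant mapping class group is in hand, the spectral sequence argument is essentially formal. A secondary technical point is that the spectral sequence has twisted coefficients $H^q(B\Diff_0(M);\QQ)$ carrying the natural $\Gamma$-action, so one has to know that arithmeticity gives finite cohomological dimension with arbitrary rational module coefficients—this is exactly what the transfer argument above supplies.
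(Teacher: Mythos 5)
Your proposal is correct and follows essentially the same route as the paper: Sullivan's arithmeticity theorem gives finite $\QQ$-cohomological dimension of the mapping class group, and the Serre spectral sequence of $B\Diff^+_0(M) \to B\Diff^+(M) \to B\Gamma_M$ shows that the kernel of restriction to the identity component is a nil ideal, so the composite becomes injective after dividing by nilradicals. The one place the paper is more careful is the pointed and disc-fixed cases: rather than asserting that Sullivan's theorem applies directly to $\pi_0(\Diff^+(M,\mpoint))$ and $\pi_0(\Diff^+(M,D^d))$, it deduces their finite $\QQ$-cohomological dimension from that of $\Gamma_M$ using the fibrations $M \to B\Diff^+(M,\mpoint) \to B\Diff^+(M)$ and $SO(d) \to B\Diff^+(M,D^d) \to B\Diff^+(M,\mpoint)$.
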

  \begin{proof}
  Both maps are epimorphisms by definition, so it is enough to show that the composition is an isomorphism. Sullivan has shown \cite[Theorem 13.3]{sullivan} that under the stated conditions the mapping class group  $\Gamma_M = \pi_0(\Diff^+(M))$ is commensurable to an arithmetic group. In particular, it has finite virtual cohomological dimension, and so finite $\QQ$-cohomological dimension.
  
  Letting $\Diff^+_0(M)$ denote the path component of the identity, there is a fibration sequence
  $$B\Diff^+_0(M) \overset{i}\longrightarrow B\Diff^+(M) \overset{p}\longrightarrow B\Gamma_M$$
  and an associated Serre spectral sequence. We then proceed much as in the proof of Theorem \ref{thm:nilpotence-for-products}: any class in 
  $$\mathrm{Ker}(i^* : H^*(B\Diff^+(M);\QQ) \to H^*(B\Diff^+_0(M);\QQ))$$
  has positive Serre filtration, so some power of it has Serre filtration beyond the $\QQ$-cohomological dimension of $B\Gamma_M$. Such a power is therefore zero, showing that $\mathrm{Ker}(i^*)$ consists of nilpotent elements, and hence that the map
  $$i^* : H^*(B\Diff^+(M);\QQ)/\sqrt{0} \longrightarrow H^*(B\Diff^+_0(M);\QQ)/\sqrt{0}$$
  is injective. The result now follows by the discussion of classifying spaces in the previous section.
  
  For $(M, \mpoint)$ and $(M, D^d)$ the argument is identical, but using the relative mapping class groups of these pairs. For $M$ simply-connected the natural map $\Gamma_{(M,\mpoint)} \to \Gamma_M$ is an isomorphism, as may be seen by the fibration sequence $M \to B\Diff^+(M, \mpoint) \to B\Diff^+(M)$. The fibration $SO(d) \to B\Diff^+(M, D^d) \to B\Diff^+(M,\mpoint)$ shows that $\Gamma_{(M, D^d)} \to \Gamma_{(M,\mpoint)}$ is onto with kernel of order at most 2. Thus $\Gamma_{(M, D^d)}$ still has finite $\QQ$-cohomological dimension.
  \end{proof}
  
  \sectwo{Genus zero}\label{sec:genus-zero}
  
  For the manifold $W_0 = S^{2n}$, we can improve Theorem \ref{thm:Main1} to say that
  $$R^*(W_0) = \QQ[\kappa_{ep_1}, \kappa_{ep_2}, \ldots, \kappa_{ep_n}],$$
  that is, we do not need to divide by the nilradical, nor need a condition on the parity of $n$. Let $\pi: E \to B$ be a fibre bundle with fibre $S^{2n}$. We use the following two strengthenings of our results.

    \begin{lemma}\label{lem:hirz-genus-0}
        $\gkappa{\Hirz_i}(\pi) = 0 \in H^*(B;\QQ)$.
    \end{lemma}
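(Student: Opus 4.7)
The plan is to specialize the argument proving Theorem~\ref{thm:hirz-nilpotent} to the case $M = S^{2n}$, exploiting the fact that $H^n(S^{2n};\QQ) = 0$.

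For step one, I would note that for any oriented $S^{2n}$-bundle $\pi : E \to B$, the vanishing of $H^n(S^{2n})$ makes the lattice $H = H^n(S^{2n};\ZZ)/\mathrm{torsion}$ of Theorem~\ref{thm:hirz-nilpotent}'s proof trivial, whence $\Aut(H,\lambda)$ is trivial and $B\Aut(H,\lambda)$ is contractible. Invoking Atiyah's theorem as in that proof then shows that $\gkappa{\Hirz_i}(\pi)$ is pulled back from $H^*(\mathrm{pt};\QQ)$, so it vanishes in every positive degree. Since $\gkappa{\Hirz_i}(\pi)$ lives in degree $4i - 2n$, this handles every case with $4i \neq 2n$ (the degrees below zero being vacuous).

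For step two, I would address the borderline $4i = 2n$, which occurs only when $n$ is even and $i = n/2$; here $\gkappa{\Hirz_i}(\pi)$ lies in $H^0(B;\QQ)$. Restricting to each connected component of $B$ as we may, this rational number equals $\int_{S^{2n}} \Hirz_{n/2}(TS^{2n}) = 2^{n-2i} \int_{S^{2n}} \mathcal{L}_{n/2}(TS^{2n})$, and the classical (unparametrized) Hirzebruch signature theorem identifies it with $2^{n-2i}\cdot \mathrm{sig}(S^{2n}) = 0$, using one last time that $H^n(S^{2n};\QQ) = 0$.

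No serious obstacle stands in the way; the only subtlety worth flagging is that the nilpotence machinery of Section~\ref{sec:Hirz} does not by itself suffice in the borderline degree $4i = 2n$, since $H^0(B;\QQ)$ carries no nonzero nilpotents and so nilpotence would be automatic without yielding the honest vanishing asserted here. Hence the unparametrized signature theorem must be invoked separately to upgrade the nilpotence conclusion to actual vanishing in that one exceptional degree.
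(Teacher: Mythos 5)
Your proof is correct and follows the paper's argument: the paper likewise observes that $H = H^n(S^{2n};\ZZ)/\mathrm{torsion} = 0$ forces $\Aut(H,\lambda)$ to be trivial, so that $\gkappa{\Hirz_i}(\pi)$ is pulled back from the contractible space $B\Aut(H,\lambda)$. Your extra step for the borderline degree $4i = 2n$ (where "pulled back from a point" alone does not force vanishing in $H^0$) is a legitimate refinement that the paper's one-line proof leaves implicit; it is handled equally well by your appeal to $\mathrm{sig}(S^{2n})=0$ or by noting that Atiyah's theorem identifies the degree-zero component with the signature of the trivial form $(H,\lambda)=0$.
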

    \begin{proof}
    Observe that in the proof of Theorem \ref{thm:hirz-nilpotent}, the group $\mathrm{Aut}(H, \lambda)$ is trivial when $g=0$. Therefore, the classes $\gkappa{\Hirz_i}(\pi)$ must be pulled back through the cohomology of the contractible space $B\mathrm{Aut}(H, \lambda)$.
    \end{proof}

    \begin{lemma}\label{lem:PushGenusZero}
        If $p,q \in H^*(E;\QQ)$ satisfy $\pi_!(p)=0$ and $\pi_!(q) = 0$, then $\pi_!(pq)=0$. 
    \end{lemma}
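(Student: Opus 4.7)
The plan is to exploit the fact that $S^{2n}$ has rational cohomology concentrated in just two degrees, so that an oriented $S^{2n}$-bundle is governed by a Gysin-style long exact sequence. First I would observe that since $\pi$ is a smooth oriented bundle, the action of $\pi_1(B)$ on $H^*(S^{2n};\QQ)$ is trivial, so the Serre spectral sequence has $E_2^{p,q} = H^p(B;\QQ) \otimes H^q(S^{2n};\QQ)$, concentrated in rows $q=0$ and $q=2n$. The only potentially nonzero differential is $d_{2n+1}$, determined by $\eta := d_{2n+1}(\tau) \in H^{2n+1}(B;\QQ)$, where $\tau$ generates the fibre cohomology in degree $2n$.

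Second, this gives a long exact sequence of the usual Gysin shape
\begin{equation*}
\cdots \to H^{k-2n-1}(B;\QQ) \xrightarrow{\cup \eta} H^k(B;\QQ) \xrightarrow{\pi^*} H^k(E;\QQ) \xrightarrow{\pi_!} H^{k-2n}(B;\QQ) \xrightarrow{\cup \eta} H^{k+1}(B;\QQ) \to \cdots
\end{equation*}
The essential point is exactness at the middle term: $\ker(\pi_!) = \image(\pi^*)$.

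Finally, applying this to the hypotheses $\pi_!(p) = 0$ and $\pi_!(q) = 0$, I would write $p = \pi^* p'$ and $q = \pi^* q'$ for some $p', q' \in H^*(B;\QQ)$. Then $pq = \pi^*(p' q')$, and the projection formula gives
\begin{equation*}
\pi_!(pq) = \pi_!(\pi^*(p'q')) = p' q' \cdot \pi_!(1).
\end{equation*}
But $\pi_!(1) \in H^{-2n}(B;\QQ) = 0$, so $\pi_!(pq) = 0$ as required.

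There is no real obstacle in this argument; the whole content is the two-row concentration of the Serre spectral sequence for an oriented $S^{2n}$-bundle, which is automatic, together with the projection formula. The lemma is essentially a statement that "$\ker \pi_!$ is generated by pulled-back classes", which is cleaner in the genus zero case than in the higher genus situation treated by Theorem~\ref{thm:nilpotence-for-products}.
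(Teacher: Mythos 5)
Your proof is correct and follows essentially the same route as the paper: exactness of the Gysin sequence gives $\ker \pi_! = \image \pi^*$, and then the projection formula finishes. The only (immaterial) difference is that the paper pulls back only $p$ and concludes via $\pi_!(\pi^*(p')q) = p'\cdot\pi_!(q) = 0$, whereas you pull back both classes and use $\pi_!(1) \in H^{-2n}(B;\QQ) = 0$; both are valid.
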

    \begin{proof}
    It follows from the Gysin sequence for $\pi$ that there is a $p' \in H^*(B;\QQ)$ such that $p = \pi^*(p')$, but then  $\pi_!(pq)=\pi_!\left( \pi^*(p')q \right) = p'\cdot \pi_!(q)=0$.
    \end{proof}
    
It follows from Lemma \ref{lem:hirz-genus-0} that $\kappa_{\Hirz_i}=0 \in R^*(W_0)$, and so, by Lemma \ref{lem:PushGenusZero}, $\kappa_{\Hirz_J}=0 \in R^*(W_0)$ for any monomial $\Hirz_J$. As these give a basis for the subring of $H^*(BSO(2n);\QQ)$ generated by Pontrjagin classes, we have that $\kappa_{p_I}=0$ too. For a fibre bundle $\pi: E \to B$ with fibre $S^{2n}$, by the Gysin sequence we therefore have $p_I = \pi^*((p_I)')$ for some $(p_I)' \in H^*(B;\QQ)$, and so
$$\kappa_{ep_I}(\pi) = \pi_!(e(T_\pi) \pi^*((p_I)')) = \pi_!(e(T_\pi)) \cdot (p_I)' = 2(p_I)'.$$
In particular, we may express $\kappa_{ep_I}(\pi)$ by a universal formula in the $p_i' = \tfrac{1}{2}\kappa_{ep_i}(\pi)$, so $\kappa_{ep_1}, \kappa_{ep_2}, \ldots, \kappa_{ep_n}$ generate $R^*(W_0)$. They are algebraically independent in this ring as we have already show that they are algebraically independent in $R^*(W_0)/\sqrt{0}$.

    \sectwo{Genus one}\label{sec:genus-one}

We have two outstanding claims to prove in the case $g=1$. Firstly, Theorem \ref{thm:Main2} (\ref{it:Main2:3}), and secondly that $R^*(W_1,\mpoint)$ is finitely-generated, so that $R^*(W_1,D^{2n})$ is too and we can deduce Corollary \ref{cor:Main4} from Corollary \ref{cor:Main3}.

\secthree{The Euler class}

We first prove the analogue of Proposition \ref{prop:NilRelations} (\ref{it:NilRelations:3}) for $g=1$.

\begin{proposition}\label{prop:Genus1EulerTriv}
The class $e \in R^*(W_1, \mpoint)$ is nilpotent.
\end{proposition}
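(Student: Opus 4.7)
The key obstacle in extending the proof of Proposition~\ref{prop:NilRelations}(\ref{it:NilRelations:3}) from $g>1$ to $g=1$ is that the relation $(\chi-2)\chi e=-\gkappa{e^2}$ on which that argument rested reduces to the already-known nilpotence of $\gkappa{e^2}$ and loses all leverage on $e$ itself when $\chi=0$. My plan is therefore to transfer the nilpotence of $e\in R^*(W_2,\mpoint)$ (Proposition~\ref{prop:NilRelations}(\ref{it:NilRelations:3}) applied to $g=2$) to $R^*(W_1,\mpoint)$ via a fibrewise stabilization that converts $W_1$-bundles with section into $W_2$-bundles with section, preserving the Euler class at the section.

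Concretely, starting from an oriented smooth fibre bundle $\pi:E\to B$ with fibre $W_1$ and section $s:B\to E$, the construction would form the fibrewise boundary connected sum of (a pullback of) $E$ with the trivial bundle $B'\times W_1\to B'$, attached along a framed auxiliary section $t$ disjoint from $s$ and a constant framed section of $B'\times W_1$. The resulting bundle $\tilde\pi:\tilde E\to B'$ has fibre $W_1\#W_1\cong W_2$, and because the connect sum is local near $t(B')$ and chosen disjoint from $s$, the vertical tangent bundle $T_{\tilde\pi}$ agrees with $T_\pi$ near $s$. One can therefore take $\tilde s:=s$ and obtain $\tilde s^*(e(T_{\tilde\pi}))=f^*(s^*(e(T_\pi)))$, where $f:B'\to B$ is the structural map. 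Applying Proposition~\ref{prop:NilRelations}(\ref{it:NilRelations:3}) to the bundle $(\tilde\pi,\tilde s)$ furnishes a universal $N$ with $\tilde s^*(e(T_{\tilde\pi}))^N=0$, and if $f^*$ is injective on the class $s^*(e(T_\pi))^N$ then pulling back yields the desired vanishing in $H^*(B;\QQ)$, whence $e^N=0$ in $R^*(W_1,\mpoint)$.

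The main difficulty is producing the auxiliary framed section $t$ disjoint from $s$: a generic $W_1$-bundle with section admits neither a second disjoint section (obstructions in $H^*(B;\pi_*(S^n\vee S^n))$, since $W_1\setminus\{\mpoint\}\simeq S^n\vee S^n$) nor a framing of its normal bundle (further obstructions in $H^*(B;\pi_*SO(2n))$). The remedy would be to pass to a suitable tower $f:B'\to B$ of fibrewise Stiefel-type bundles over $E\setminus s(B)\to B$ on which these obstructions vanish tautologically, and then verify that $f^*$ remains injective on the degree-$2nN$ class $s^*(e(T_\pi))^N$. Establishing this injectivity, likely via Leray--Hirsch control over the tower combined with the vanishing $\chi(W_1)=0$ and stable triviality of $TW_1$ (which ensure that the fibres of the auxiliary bundles contribute only classes that survive to the relevant degree), is the technical heart of the argument. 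If this geometric route proves recalcitrant, an alternative would be to exploit the Serre spectral sequence of $W_1\to B\Diff^+(W_1,\mpoint)\to B\Diff^+(W_1)$: the fibre restriction of $e$ vanishes because $e(TW_1)=0$, so $e$ has positive Serre filtration, and one could try to combine this with the finite $\QQ$-cohomological dimension of the mapping class group $\Gamma_{W_1}$ (invoked in Proposition~\ref{prop:ChangeOfClass}) to force nilpotence.
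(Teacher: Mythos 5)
There is a genuine gap, and it sits exactly where you placed the ``technical heart'': the rational injectivity of $f^* $ along the tower that is supposed to produce the disjoint framed section $t$. This is not a technicality that can be expected to go through --- it fails in general. The primary obstruction to a section of $E\setminus s(B)\to B$ lies in $H^{n+1}(B;\pi_n(S^n\vee S^n))=H^{n+1}(B;\ZZ^2)$, and the obstructions to framing the normal bundle of such a section include the rational Pontrjagin classes $p_i(t^*T_\pi)$; none of these groups is torsion, so the only way to make the obstructions vanish ``tautologically'' is to pull back along maps that kill rational cohomology classes, after which $f^*$ has no reason to remain injective on $s^*(e(T_\pi))^N$. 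Indeed Theorem \ref{thm:Main2} (\ref{it:Main2:3}) shows that the classes $p_i$ at a section are \emph{not} nilpotent in $R^*(W_1,\mpoint)$, so there is no universal, rationally injective base change after which sections become framable. Contrast this with the paper's proof: there the only lifting problem that needs to be solved is lifting $(a,b):E\to K(\ZZ,n)\times K(\ZZ,n)$ through $S^n\times S^n$, and because $n$ is odd this map is a rational equivalence, so \emph{all} obstructions are torsion and can be killed by a tower that stays rationally injective. Your fall-back suggestion also does not close the argument: in the fibration $W_1\to B\Diff^+(W_1,\mpoint)\to B\Diff^+(W_1)$ the class $e$ does have positive Serre filtration, but the base $B\Diff^+(W_1)$ has infinite $\QQ$-cohomological dimension (only $\Gamma_{W_1}$ is finite-dimensional, and that lives in the different fibration $B\Diff^+_0\to B\Diff^+\to B\Gamma$); combining the two would require knowing that $e$ already dies over $B\Diff^+_0(W_1,\mpoint)$, which is precisely the content to be proved.

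The paper's route is entirely different and supplies the missing idea. It first proves (Theorem \ref{thm:EulerClassHtyInv}, via parametrised Spanier--Whitehead duality and the Becker--Gottlieb transfer) that the Euler class of the vertical tangent bundle is a fibre homotopy invariant of the bundle. It then shows that a homologically trivial $W_1$-bundle over a finite complex becomes fibre homotopy trivial after a rationally injective base change (the torsion-obstruction argument above), whence $e(T_\pi)$ itself --- not just its restriction to the section --- vanishes rationally, giving $e=0$ in $R^*_{\text{torelli}}(W_1,\mpoint)$; Proposition \ref{prop:ChangeOfClass} then upgrades this to nilpotence in $R^*(W_1,\mpoint)$. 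If you want to salvage a stabilisation argument along your lines, you would need to replace the framed-disjoint-section construction by something whose obstructions are rationally trivial, and at present I do not see how to do that.
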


Our proof of this proposition will use the following, which would seem to be quite generally useful in the study of tautological rings.

\begin{theorem}\label{thm:EulerClassHtyInv}
Let $B$ be a finite CW-complex, let $\pi: E \to B$ and $\pi' : E' \to B$ be smooth fibre bundles with closed $d$-manifold fibres, and let $f : E \to E'$ be a map over $B$ which is a fibre homotopy equivalence. Then $f^*(e(T_{\pi'})) = e(T_\pi) \in H^d(E;\ZZ)$.
\end{theorem}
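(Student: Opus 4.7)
The plan is to express $e(T_\pi)$ as coming from a class on the fibre product $E \times_B E$ that is visibly homotopy invariant, via the classical formula relating the Euler class to the self-intersection of the fibrewise diagonal. Let $\Delta_\pi \colon E \to E \times_B E$ denote the fibrewise diagonal; its normal bundle is canonically $T_\pi$. Setting $[\Delta_\pi] := (\Delta_\pi)_!(1) \in H^d(E \times_B E;\ZZ)$, the self-intersection formula gives $e(T_\pi) = \Delta_\pi^*[\Delta_\pi]$, and similarly for $\pi'$. Since $(f \times_B f) \circ \Delta_\pi = \Delta_{\pi'} \circ f$, the theorem reduces to showing
\[
(f \times_B f)^*[\Delta_{\pi'}] = [\Delta_\pi] \quad \text{in } H^d(E \times_B E;\ZZ).
\]

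To establish this, I would characterize $[\Delta_\pi]$ via fibrewise Poincar\'e duality. Writing $p_1, p_2 \colon E \times_B E \to E$ for the two projections, $[\Delta_\pi]$ is the unique class $\tau \in H^d(E \times_B E;\ZZ)$ satisfying
\[
(p_1)_!\bigl(\tau \cdot p_2^*\alpha\bigr) = \alpha \quad \text{for all } \alpha \in H^*(E;\ZZ).
\]
Existence follows from the projection formula together with $p_i \circ \Delta_\pi = \mathrm{id}_E$. The second ingredient is a naturality statement for oriented fibre integration: since $f \times_B f$ is a fibre-homotopy equivalence of oriented smooth manifold bundles covering $f$, one has $f^* \circ (p_1')_! = (p_1)_! \circ (f \times_B f)^*$ on cohomology. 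This is the Casson--Gottlieb/Becker--Gottlieb transfer-invariance, reflecting that an orientation-preserving homotopy equivalence of closed oriented manifolds preserves the fundamental class; it can be checked fibrewise and globalized via the Serre spectral sequence.

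Granted these two inputs, the computation is immediate. Set $\tau := (f \times_B f)^*[\Delta_{\pi'}]$, and given $\alpha \in H^*(E;\ZZ)$ write $\alpha = f^*(\alpha')$ (possible as $f^*$ is an isomorphism). Since $p_2' \circ (f \times_B f) = f \circ p_2$, we have $p_2^*(\alpha) = (f \times_B f)^*(p_2')^*(\alpha')$, and the naturality above yields
\[
(p_1)_!\bigl(\tau \cdot p_2^*\alpha\bigr) = f^*(p_1')_!\bigl([\Delta_{\pi'}] \cdot (p_2')^*\alpha'\bigr) = f^*(\alpha') = \alpha.
\]
By uniqueness, $\tau = [\Delta_\pi]$, and pulling back via $\Delta_\pi$ gives $f^*(e(T_{\pi'})) = \Delta_\pi^*[\Delta_\pi] = e(T_\pi)$.

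The main obstacle is the \emph{uniqueness} in the characterization of $[\Delta_\pi]$. Fibrewise, on each $M_b \times M_b$, the characterization pins down $\tau|_{M_b \times M_b} = \sum_i e_i \otimes e_i^\vee$ (Poincar\'e-dual basis expansion), so fibrewise uniqueness is the classical statement. Globalizing requires more care: a class $\sigma$ satisfying $(p_1)_!(\sigma \cdot p_2^*\alpha) = 0$ for all $\alpha$ restricts to $0$ on every fibre of $\pi \times_B \pi \colon E \times_B E \to B$, and to promote this to $\sigma = 0$ in $H^d(E \times_B E;\ZZ)$ I would use the parametrized Poincar\'e duality pairing $(\sigma_1, \sigma_2) \mapsto (\pi \times_B \pi)_!(\sigma_1 \smile \sigma_2)$: a projection-formula computation shows that the vanishing of $(p_1)_!(\sigma \cdot p_2^*\alpha)$ implies $\sigma$ pairs trivially with every $p_1^*\beta_1 \cdot p_2^*\beta_2$, and since such products span $H^*(M \times M)$ fibrewise a Serre-spectral-sequence argument using fibrewise duality forces $\sigma = 0$. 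Should this approach seem too heavy, one may alternatively induct on a finite CW-decomposition of $B$ (finite by hypothesis) to reduce to the classical fact that $f^*e(TM') = e(TM)$ for a homotopy equivalence $f \colon M \to M'$ of closed oriented manifolds.
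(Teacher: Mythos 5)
Your strategy---write $e(T_\pi) = \Delta_\pi^*[\Delta_\pi]$ for the fibrewise diagonal class $[\Delta_\pi]=(\Delta_\pi)_!(1)$ and show that this class is a fibre homotopy invariant---is sound in outline, and is in fact close in spirit to the paper's proof, where the class dual to the transfer (the paper's $\epsilon_E$, pushed through the Thom isomorphism) plays the role of your $[\Delta_\pi]$. The gap is in the step you yourself flag as the main obstacle: the characterisation of $[\Delta_\pi]$ as the \emph{unique} $\tau$ with $(p_1)_!(\tau\cdot p_2^*\alpha)=\alpha$ is false with $\ZZ$ coefficients, already over a point. If $t\in H^i(M;\ZZ)$ and $s\in H^{d-i}(M;\ZZ)$ are both torsion, then $\sigma=t\times s\in H^d(M\times M;\ZZ)$ satisfies $(p_1)_!(\sigma\cdot p_2^*\alpha)=\pm\, t\cdot\int_M s\,\alpha=0$ for every $\alpha$, because a torsion class in top degree integrates to zero; yet $\sigma$ need not vanish, and $\Delta^*\sigma=t\cdot s$ need not vanish either, so the ambiguity genuinely contaminates the conclusion. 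Since the theorem is asserted in $H^d(E;\ZZ)$ for an arbitrary closed fibre, this is fatal as written. (The other ingredient you use, invariance of $(p_1)_!$ under fibre homotopy equivalence, is fine.)

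Even rationally the globalisation is not actually carried out. Restricting your pairing condition to a fibre only tests $\sigma|_{M_b\times M_b}$ against classes $1\times(\alpha|_{M_b})$ with $\alpha|_{M_b}$ in the image of $H^*(E;\QQ)\to H^*(M_b;\QQ)$, which need not be all of $H^*(M_b;\QQ)$; and a class that vanishes on every fibre---or over every cell, for your fallback induction over a CW structure on $B$---merely has positive Leray/Serre filtration and need not be zero. The content hiding in your phrase ``a Serre-spectral-sequence argument using fibrewise duality'' is precisely what the paper supplies explicitly via fibrewise Spanier--Whitehead duality together with the Poincar{\'e}--Hopf identification of the resulting class with $e(T_\pi)$. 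To repair your argument you would need to prove $(f\times_B f)^*[\Delta_{\pi'}]=[\Delta_\pi]$ directly, e.g.\ by exhibiting $[\Delta_\pi]$ as the fibrewise Poincar{\'e} dual of a parametrised fundamental class that is natural under degree-one fibre homotopy equivalences---which brings you back to the duality machinery---rather than by a uniqueness characterisation in cohomology.
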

\begin{proof}
We refer to \cite{BG} for background on the following, and to \cite[Chapter 15]{MaySigurdsson} for technical details of fibrewise Spanier--Whitehead duality. For a space $X$ over $B$, let $X_+ = X \sqcup B$ be the associated ex-space, and $\Sigma^\infty_B X_+$ denote the fibrewise suspension spectrum. By \cite[Theorem 4.7]{BG} or \cite[Theorem 15.1.1]{MaySigurdsson} the parameterised spectrum $\Sigma^\infty_B E_+$ is dualisable, and in fact its dual can be made rather explicit. Choose a smooth embedding $E \subset B \times \RR^N$ with tubular neighbourhood $U$ and projection $p : U \to E$, and let $U^+_B$ denote the 1-point compactification of $U$ formed fibrewise over $B$, so there is a canonical section $s_\infty : B \to U^+_B$ given by the points at $\infty$. There is then a map
\begin{align*}
B \times S^N & \longrightarrow E_+ \wedge_B U^+_B\\
(b, x) & \longmapsto \begin{cases}
s(b) \wedge_B s_\infty(b) & (b,x) \not\in U\\
p(b,x) \wedge_B (b,x) & (b,x) \in U
\end{cases}
\end{align*}
giving a map 
$$\mu_E : \Sigma^\infty_B B_+ \longrightarrow \Sigma^\infty_B E_+ \wedge_B \Sigma^{\infty-N}_BU^+_B$$
of parameterised spectra which exhibits $\Sigma^\infty_BE_+$ and $\Sigma^{\infty-N}_BU^+_B$ as Spanier--Whitehead dual. There is therefore a complementary duality map
$$\hat{\mu}_E : \Sigma^{\infty-N}_BU^+_B \wedge_B \Sigma^\infty_B E_+ \longrightarrow \Sigma^\infty_B B_+$$
and the composition
\begin{align*}
\Sigma^\infty_B B_+ &\overset{\mu_E}\longrightarrow \Sigma^\infty_B E_+ \wedge_B \Sigma^{\infty-N}_BU^+_B \overset{\cong}\longrightarrow \Sigma^{\infty-N}_BU^+_B  \wedge_B \Sigma^\infty_B E_+\\
 &\overset{\mathrm{Id} \wedge_B \Delta}\longrightarrow \Sigma^{\infty-N}_BU^+_B  \wedge_B \Sigma^\infty_B E_+ \wedge_B \Sigma^\infty_B E_+\\
 & \overset{\hat{\mu}_E \wedge_B \mathrm{Id}}\longrightarrow \Sigma^{\infty}_B B_+  \wedge_B \Sigma^\infty_B E_+ \overset{\simeq}\longrightarrow \Sigma^\infty_B E_+ 
\end{align*}
is a lift of the Becker--Gottlieb transfer of $\pi$ to parameterised spectra. Dualising this map gives a map of parameterised spectra
$$\epsilon_E: \Sigma^{\infty-N}_BU^+_B \longrightarrow \Sigma^\infty_B B_+$$
and so, base changing along $B \to \{*\}$ and then collapsing $B$ to a point, we obtain a map of spectra
$$c: \Sigma^{\infty-N}U^+ \longrightarrow \Sigma^\infty B_+ \longrightarrow \Sigma^\infty S^0.$$
On the other hand, $U$ is homeomorphic to the normal bundle $\nu_\pi$ of $E$ in $B \times \RR^N$, so $U^+$ is homeomorphic to the Thom space $\mathrm{Th}(\nu_\pi)$. The pullback along $c$ of the canonical cohomology class in $H^0(\Sigma^\infty S^0;\ZZ)$, followed by the Thom isomorphism (as $T_\pi$, and hence $\nu_\pi$, is oriented), therefore gives a class in $H^{d}(E;\ZZ)$. It follows from the Poincar{\'e}--Hopf theorem that this is $e(T_\pi)$.

The analogous construction, using a tubular neighbourhood $E' \subset V \subset B \times \RR^N$, describes $e(T_{\pi'})$. The point is that the above construction used only fibrewise Spanier--Whitehead duality, and so if $D(f_+) : \Sigma^{\infty-N}_BV_B^+ \to \Sigma^{\infty-N}_BU_B^+$ is the dual of $f_+$ then we have 
$$\epsilon_E \circ D(f_+) \simeq \epsilon_{E'} : \Sigma^{\infty-N}_B V_B^+ \longrightarrow \Sigma^\infty_B B_+$$
and so, as under the Thom isomorphism $D(f_+)^*$ induces the map $(f^*)^{-1} : H^*(E;\ZZ) \to H^*(E';\ZZ)$, we have $f^*(e(T_{\pi'})) = e(T_\pi)$.
\end{proof}

The arguments of this proof can be refined to \emph{define} an Euler class of the vertical tangent bundle, $e(T_\pi) \in H^d(E;\ZZ)$, when $\pi : E \to B$ is a fibration whose fibre is a Poincar{\'e} duality space of dimension $d$. In particular, for such a Poincar{\'e} duality space $M^d$ one can define the universal such class $e \in H^d(B\mathrm{hAut}^+(M, \mpoint);\ZZ)$.

\begin{proof}[Proof of Proposition \ref{prop:Genus1EulerTriv}]
Let $\pi: E^{k+2n} \to B^k$ be a homologically trivial smooth oriented fibre bundle with fibre $W_1$, and let $s : B \to E$ be a section. We will show that $s^*(e(T_\pi))=0 \in H^{2n}(B;\QQ)$, so that $e=0 \in R^*_{\text{torelli}}(W_1, \mpoint)$. Hence, by Proposition \ref{prop:ChangeOfClass}, $e$ is nilpotent in $R^*(W_1, \mpoint)$.

Under the stated assumptions the Serre spectral sequence for $\pi$ has a product structure and collapses, giving classes $a,b \in H^n(E;\ZZ)$ which restrict to a basis of $H^n(W_1;\ZZ)$. The obstructions to finding a lift
\begin{equation*}
\xymatrix{
 & S^n \times S^n \ar[d]\\
E \ar[r]_-{(a,b)} \ar@{-->}[ur] & K(\ZZ,n) \times K(\ZZ,n)
}
\end{equation*}
lie in $H^{i+1}(E; \pi_i(S^n \times S^n))$ for $n < i < k+2n$, which are all torsion groups.

Let $ \cdots \to B_2 \overset{f_1}\to B_1 \overset{f_0}\to B_0 = B$ be a tower of $k$-dimensional CW-complexes in which each $f_i^* : H^*(B_i;\ZZ) \to H^*(B_{i+1};\ZZ)$ annihilates all torsion but is rationally injective (such a tower may be formed by iteratedly taking a $k$-skeleton of the homotopy fibre of the canonical map $B_i \to \prod_{j=1}^k K(\mathrm{tors} H^j(B_i;\ZZ),j)$). We may pull back $\pi$ and $s$ to obtain homologically trivial bundles $\pi_i : E_i \to B_i$ with sections $s_i$. Each of these also have Serre spectral sequences which have a product structure and collapse, and they are connected by maps $\hat{f}_i : E_{i+1} \to E_i$ covering the $f_i$.

It follows that $\hat{f}_i^*$ always sends a torsion class to one of higher Serre filtration, and so in particular (as the Serre spectral sequence for each $\pi_i$ has three rows) that the composition of three such maps annihilates any torsion class. Therefore a finite composition of such maps annihilates all obstructions for finding the dashed lift above, so for some $i$ we have a map $t : E_i \to S^n \times S^n$ splitting the inclusion of the fibre. It follows that $\pi_i$ is fibre homotopy equivalent to the trivial $W_1$-bundle, and hence by Theorem \ref{thm:EulerClassHtyInv} we have
$$e(T_{\pi_i})=0 \in H^n(E_i;\ZZ)$$
and so $s_i^*e(T_{\pi_i}) = 0 \in H^n(B_i;\ZZ)$. The maps $f_i^*$ were rationally injective, and so $s^*e(T_\pi)=0 \in H^{2n}(B;\QQ)$, as required.
\end{proof}

Finally, we can prove the third part of Theorem \ref{thm:Main2}.

\begin{proof}[Proof of Theorem \ref{thm:Main2} (\ref{it:Main2:3})]
We have shown that $e$ is nilpotent in $R^*(W_1, \mpoint)$. Consider the fibre bundle
  $$S^n \times S^n \longrightarrow BSO(n) \times BSO(n) \overset{\pi}\longrightarrow BSO(n+1) \times BSO(n+1)$$
and let $\pi' : E \to BSO(n) \times BSO(n)$ be the fibre bundle obtained by pulling $\pi$ back along itself. Then $\pi'$ has a section $s'$, and the ring $H^*(BSO(n) \times BSO(n);\QQ)$ has no nilpotent elements, so there is a ring homomorphism
  $$\phi : R^*(W_1, \mpoint)/\sqrt{0} \longrightarrow H^*(BSO(n) \times BSO(n);\QQ).$$
There is a bundle isomorphism $(s')^*(T_{\pi'}) \cong V_1 \oplus V_2$, so $\phi(p_i) = p_i(V_1 \oplus V_2)$. It follows from Lemma \ref{lem:alg-ind-pi-bg} that $p_1, p_2, \ldots, p_{n-1} \subset R^*(W_1, \mpoint)/\sqrt{0}$ are algebraically independent.
\end{proof}

\secthree{Finite generation}

The proof in \cite{grigoriev-relations} showing that $R^*(W_g)$ is finitely generated does not apply for $g \leq 1$. We have computed $R^*(W_0)$ completely, and can observe that it is finitely generated, which leaves only the case $g=1$. In fact, we do not know whether this ring is finitely generated. However, we have the following.
  
  \begin{proposition}\label{prop:fg}
      For all $g\geq 0$, the ring $R^*(W_g,\mpoint)$ is finitely generated. 
  \end{proposition}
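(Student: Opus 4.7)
The plan is to separate the cases $g \neq 1$ and $g = 1$. For $g \neq 1$ the argument is short: we already know $R^*(W_g)$ is finitely generated, from \cite[Theorem 1.1]{grigoriev-relations} for $g > 1$, and from the explicit presentation $R^*(W_0) = \QQ[\kappa_{ep_1},\ldots,\kappa_{ep_n}]$ of Section~\ref{sec:genus-zero} for $g = 0$. The tautological ring with a section $R^*(W_g,\mpoint)$ is generated as a $\QQ$-algebra by the image of the natural map $R^*(W_g) \to R^*(W_g,\mpoint)$ (which accounts for all the $\kappa_c$ generators) together with the classes $c(\pi,s) = s^*(c(T_\pi))$ for $c \in \mathcal{B}$. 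Since the latter collection is built as a $\QQ$-algebra from just $p_1,\ldots,p_{n-1},e$ using the presentation $H^*(BSO(2n);\QQ) = \QQ[p_1,\ldots,p_{n-1},e]$, combining any finite generating set of $R^*(W_g)$ with $\{p_1,\ldots,p_{n-1},e\}$ yields a finite generating set for $R^*(W_g,\mpoint)$.

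The substantive case is $g = 1$, where finite generation of $R^*(W_1)$ is not known and must be bypassed. The inputs here are Theorem~\ref{thm:Main2}(\ref{it:Main2:3}) (giving $R^*(W_1,\mpoint)/\sqrt{0} = \QQ[p_1,\ldots,p_{n-1}]$) and Proposition~\ref{prop:Genus1EulerTriv} (nilpotence of $e$); together with Corollary~\ref{cor:Rewriting} and Proposition~\ref{prop:NilRelations} they imply that every $\kappa_c$ for $c \in \mathcal{B}$ is nilpotent in $R^*(W_1,\mpoint)$. My strategy would be to use the Leray--Hirsch decomposition on a $W_1$-bundle $\pi : E \to B$ with section $s$: choosing lifts $\tilde a,\tilde b \in H^n(E;\QQ)$ of the standard basis of $H^n(S^n \times S^n;\QQ)$ with $s^*\tilde a = s^*\tilde b = 0$ makes $H^*(E;\QQ)$ a free $H^*(B;\QQ)$-module on $\{1,\tilde a,\tilde b,\tilde a\tilde b\}$, and expanding the Pontrjagin and Euler classes of $T_\pi$ in this basis writes each $\kappa_c(\pi) = \pi_!(c(T_\pi))$ as a polynomial in finitely many auxiliary coefficient classes in $H^*(B;\QQ)$.

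The main obstacle is that these auxiliary coefficient classes are not themselves in $R^*(W_1,\mpoint)$ and depend on the choice of lifts; only particular combinations of them are tautological. So one needs to show that the specific combinations which do land in $R^*(W_1,\mpoint)$ can all be built from a finite subset. The plan would be to use the nilpotence of $e \in R^*(W_1,\mpoint)$ to produce an explicit bound $e^N = 0$, combined with the nilpotence of each $\kappa_{p_I}$ (Proposition~\ref{prop:NilRelations}(\ref{it:NilRelations:1})) and the relations \eqref{eq:G1}, \eqref{eq:G2}, which simplify considerably since $\chi = 0$ for $g = 1$. These should let one bound the Euler exponent $a$ appearing in $c = e^a p_I$: once $a$ is large enough, the corresponding $\kappa_c$ ought to be expressible via these relations in terms of lower-degree $\kappa_{c'}$'s together with $p_1,\ldots,p_{n-1},e$. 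If this reduction can be carried out, one obtains a finite algebra generating set consisting of $p_1,\ldots,p_{n-1}, e$ together with those $\kappa_c$ with $c$ of bounded degree; the hard part, and the step I expect to require the most care, is showing that such a uniform degree bound exists.
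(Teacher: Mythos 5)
Your argument for $g \neq 1$ is fine, and is a reasonable shortcut: since every polynomial in the $\kappa_c$ vanishing on all $W_g$-bundles also vanishes on all $W_g$-bundles with section, the finitely generated ring $R^*(W_g)$ surjects onto the subring of $R^*(W_g,\mpoint)$ generated by the $\kappa_c$, and adjoining $p_1,\dots,p_{n-1},e$ then generates everything (for $g>1$ this inherits the hypothesis $n$ odd from \cite[Theorem~1.1]{grigoriev-relations}, which is the standing assumption anyway).

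The case $g=1$ is where the proposition has real content, and there your proposal has a genuine gap which I do not think can be repaired along the lines you indicate. The central problem is that essentially all of your proposed reduction tools --- Theorem~\ref{thm:Main2}~(\ref{it:Main2:3}), Corollary~\ref{cor:Rewriting}, and the relations \eqref{eq:G1}, \eqref{eq:G2} --- are statements in $R^*(W_1,\mpoint)/\sqrt{0}$. Finite generation of a quotient by the nilradical implies nothing about finite generation of the ring itself, and a relation that holds only modulo nilpotents determines an element only up to an unspecified nilpotent error, which may itself involve new generators. The honest relations you do have in $R^*(W_1,\mpoint)$ are $e^N=0$ and $\kappa_{p_I}^{N_I}=0$; but nilpotence of $\kappa_{p_I}$ does not make it decomposable, so even after bounding the Euler exponent you are left with the infinite families $\kappa_{p_I}$ and $\kappa_{ep_I}$ over all multi-indices $I$, which your relations never reduce --- so no uniform degree bound on $c$ emerges. (A secondary problem: the Leray--Hirsch decomposition of $H^*(E;\QQ)$ requires trivial monodromy on $H^n(W_1;\QQ)$ and collapse of the Serre spectral sequence, neither of which holds for a general $W_1$-bundle with section, and the passage to the homologically trivial case via Proposition~\ref{prop:ChangeOfClass} is again only an isomorphism modulo nilradicals.) What is actually needed, and what the paper supplies, is an \emph{exact} relation in $R^*(W_g,\mpoint)$ expressing $\kappa_{p^{2g+1}q}$ in terms of decomposables for every even-degree $p$ and every $q$: this is extracted from \cite[Theorem~2.6]{grigoriev-relations} on the space $\mathcal{M}_g(\{1,2,\mpoint\})$ with three marked points, by pushing forward a $(2g+1)$st power of a class that fibre-integrates to zero, and it is this family of exact relations, fed into the finite-generation mechanism of \cite[Proof of Theorem~1.1]{grigoriev-relations}, that yields the result uniformly in $g$.
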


  \begin{proof}
      Our argument follows~\cite[Example~5.20 and Lemma~5.21]{grigoriev-relations} and we shall freely use the language and notation of that paper. We work in the space $\mathcal{M}_g(\{1,2,\mpoint\})$ with three marked points $1, 2,$ and $\mpoint$. Let $p \in H^*( BSO(2n);\QQ)$ be even-dimensional, and define
      \begin{align*}
      a & =\gpsi{p}{\mpoint} - \intclass{1\mpoint} \kappa_p \in H^*(\mathcal{M}_g(\{1,2,\mpoint\});\QQ)\\
      b & =\intclass{1\mpoint} - \intclass{2\mpoint} \in H^*(\mathcal{M}_g(\{1,2,\mpoint\});\QQ).
      \end{align*}
 Both of these classes push forward to zero in $H^*(\mathcal{M}_g(\{1,2\});\QQ)$. According to~\cite[Theorem~2.6]{grigoriev-relations} we therefore have
\begin{multline*} \label{eq:relation-fin-gen-mp}
    0 = \left( \vphantom{\sum^{1}}  \left(\pi^{\{1,2, \mpoint\}}_{\{1, 2\}} \right)_! \left( \vphantom{\sum} \left(\gpsi{p}{\mpoint}- \intclass{1\mpoint} \gkappa{p}\right)\left(\intclass{1\mpoint} - \intclass{2\mpoint} \right) \right) \right) ^{2g+1} = \\
    = \left( \gpsi{p}{1} - \gpsi{p}{2} - \gpsi{e}{1} \gkappa{p} + \intclass{12}\gkappa{p} \right)^{2g+1} \in H^*(\mathcal{M}_g(\{1,2\});\QQ).
\end{multline*}

For any $q \in H^*(BSO(2n);\QQ)$ the above relation can be multiplied by $\gpsi{q}{2}$ and pushed forward to $H^*(\mathcal{M}_g(1);\QQ))$ to obtain a relation that expresses $\gkappa{p^{2g+1}q}$ in terms of decomposable elements in the tautological subring $R^*(W_g, \mpoint) \subset H^*(\mathcal{M}_g(1);\QQ)$. (Recall that this ring is generated by the $\kappa_c$ as well as $c$ for $c \in H^*( BSO(2n);\QQ )$.) As explained in~\cite[Proof of Theorem~1.1]{grigoriev-relations}, this is sufficient to show that the ring is finitely generated.
  \end{proof}
  
  It follows that the ring $R^*(W_g,D^{2n})$ is also finitely generated.

\bibliographystyle{halpha} 
\bibliography{bib}

\begin{thebibliography}{Mum83}

\bibitem[Ati69]{atiyah-signature}
M.~F. Atiyah.
\newblock The signature of fibre-bundles.
\newblock In {\em Global {A}nalysis ({P}apers in {H}onor of {K}. {K}odaira)},
  pages 73--84. Univ. Tokyo Press, Tokyo, 1969.

\bibitem[BG76]{BG}
J.~C. Becker and D.~H. Gottlieb.
\newblock Transfer maps for fibrations and duality.
\newblock {\em Compositio Math.}, 33(2):107--133, 1976.

\bibitem[Fab99]{Faber}
C.\ Faber.
\newblock A conjectural description of the tautological ring of the moduli
  space of curves.
\newblock In {\em Moduli of curves and abelian varieties}, Aspects Math., E33,
  pages 109--129. Vieweg, Braunschweig, 1999.

\bibitem[{Gri}]{grigoriev-relations}
I.~{Grigoriev}.
\newblock {Relations among characteristic classes of manifold bundles}.
\newblock arXiv:1310.6804.

\bibitem[Loo95]{Looijenga}
E.\ Looijenga.
\newblock On the tautological ring of {$\mathcal{M}_g$}.
\newblock {\em Invent. Math.}, 121(2):411--419, 1995.

\bibitem[Mor03]{Morita}
S.\ Morita.
\newblock Generators for the tautological algebra of the moduli space of
  curves.
\newblock {\em Topology}, 42(4):787--819, 2003.

\bibitem[MS74]{milnostash-characteristic}
J.~W. Milnor and J.~D. Stasheff.
\newblock {\em Characteristic classes}, volume~93.
\newblock Princeton University Press Princeton, 1974.

\bibitem[MS06]{MaySigurdsson}
J.~P. May and J.~Sigurdsson.
\newblock {\em Parametrized homotopy theory}, volume 132 of {\em Mathematical
  Surveys and Monographs}.
\newblock American Mathematical Society, Providence, RI, 2006.

\bibitem[Mum83]{Mumford}
D.\ Mumford.
\newblock Towards an enumerative geometry of the moduli space of curves.
\newblock In {\em Arithmetic and geometry, {V}ol. {II}}, volume~36 of {\em
  Progr. Math.}, pages 271--328. Birkh\"auser Boston, Boston, MA, 1983.

\bibitem[Sul77]{sullivan}
D.~Sullivan.
\newblock Infinitesimal computations in topology.
\newblock {\em Inst. Hautes \'Etudes Sci. Publ. Math.}, (47):269--331 (1978),
  1977.

\bibitem[Ven59]{venkov}
B.~B. Venkov.
\newblock Cohomology algebras for some classifying spaces.
\newblock {\em Dokl. Akad. Nauk SSSR}, 127:943--944, 1959.

\end{thebibliography}

\end{document}